\documentclass[12pt]{article}

\usepackage{a4wide}
\usepackage{amsmath}
\usepackage{amsthm}
\usepackage{amssymb}
\usepackage[english]{babel}
\usepackage{graphicx}

%%%%%%%%%NEWCOMMANDS%%%%%%%%%%%%%

\newcommand{\R}{\mathbb{R}}

\newcommand{\sd}{\textup{sd}} 
\newcommand{\st}{\textup{st}} 
\newcommand{\ord}{\textup{ord}} 
\newcommand{\ind}{\textup{ind}} 
\newcommand{\osigma}{\stackrel{\circ}{\sigma}} 
\newcommand{\otheta}{\stackrel{\circ}{\theta}} 
\newcommand{\dtau}{\dot{\tau}} 
 
\newcommand{\lk}{\textup{lk}}

\DeclareMathAlphabet{\mathcalligra}{T1}{calligra}{m}{n}

\DeclareFontFamily{U}{mathx}{\hyphenchar\font45}
\DeclareFontShape{U}{mathx}{m}{n}{
      <5> <6> <7> <8> <9> <10>
      <10.95> <12> <14.4> <17.28> <20.74> <24.88>
      mathx10
      }{}
\DeclareSymbolFont{mathx}{U}{mathx}{m}{n}
\DeclareFontSubstitution{U}{mathx}{m}{n}
\DeclareMathAccent{\widecheck}{0}{mathx}{"71}

%%%%%%%%%%%%%%%%%%%%%%%%%STYLES%%%%%%%%%%%%%%%%%%%

\newtheorem{theo}{Theorem}[section]
\newtheorem{lemma}[theo]{Lemma}
\newtheorem{prop}[theo]{Proposition}
\newtheorem{cor}[theo]{Corollary}
\newtheorem{rem}[theo]{Remark}
\newtheorem{ex}[theo]{Example}

\newtheorem{defi}[theo]{Definition}

\begin{document}
\title{Morse shellings out of discrete Morse functions}
\author{Jean-Yves Welschinger}
\maketitle

\begin{abstract} 
\vspace{0.5cm}

From the topological viewpoint, Morse shellings of finite simplicial complexes are {\it pinched} handle decompositions and extend the classical shellings. We prove that every discrete Morse function on a finite simplicial complex induces Morse shellings on its second barycentric subdivision whose critical tiles  -or pinched handles- are in one-to-one correspondence with the critical faces of the function, preserving the index. The same holds true, given any smooth Morse function on a closed manifold, for any piecewise-linear triangulation on it after sufficiently many barycentric subdivisions. \\

{Keywords :  Simplicial complex, Shellable complex, Handle decomposition, Barycentric subdivision, Discrete Morse theory, Triangulation, Piecewise linear manifold.}

\textsc{Mathematics subject classification 2020: }{55U10, 52C22, 57Q70.}
\end{abstract}

\section{Introduction}

Every compact piecewise linear manifold $M$ carries a piecewise linear handle decomposition, that is a filtration $\emptyset \subset M_0 \subset \dots \subset M_N = M$ of compact $PL$-manifolds such that each level $M_p$, $p \in \{1 , \dots , N \}$, is obtained from $M_{p-1}$ by attaching a $PL$-handle on its boundary using some piecewise linear homeomorphism, see \cite{RS} and \S \ref{subsechandledecomp}. When the manifold is smooth, such a decomposition can be deduced from the sublevel sets of any smooth Morse function and defined in the smooth category, see \cite{Morse, Smale, Milnor}. When the manifold is combinatorial, that is equipped with a piecewise linear triangulation, see \S \ref{subsecstar}, such a $PL$-handle decomposition can be obtained out of the second barycentric subdivision of the latter, where the handles are derived neighborhoods of the barycenters of the simplices, see Definition \ref{defderived}, Proposition $6.9$ of \cite{RS} and \S \ref{subsechandledecomp}. However, a $PL$-handle decomposition does not carry any canonical triangulation and in fact, triangulated manifolds which are not combinatorial do not carry any such decomposition in the $PL$-category, for the underlying piecewise linear structure would then be that of a $PL$-manifold, see \cite{RS,Qui}. Closed topological manifolds do carry some handle decompositions in the topological category though, except when four-dimensional and non smooth, see \cite{KS,FQ}.
We recently introduced some counterparts to handle decompositions in the simplicial category, which we called shelled $h$-tilings and Morse tilings due to their relations with the classical combinatorial notions of shellings \cite{BruMan,HachZ,Stanbook,Z,RS}, $h$-vectors \cite{Stan,BilLee,Fult,Stan2,Stan3} and discrete Morse theory \cite{For1,For2,Ben}, see Definition \ref{deftilings} and \cite{WelAdv,SaWel2}. Such shelled tilings exist on all finite simplicial complex after finitely many stellar subdivisions at facets \cite{WelAdv}, they encode a class of compatible discrete Morse functions whose critical faces are in one-to-one correspondence with the critical tiles -or pinched handles- of the tiling, preserving the index, see \S \ref{subsectiles} as well as \cite{SaWel2,WelHHA}  and they compute the (co)homology of the complex via two spectral sequences whose first pages are free graded modules over the critical tiles \cite{WelHHA}.

Our main result now goes in the opposite direction and provides Morse shellings out of any discrete Morse function after two barycentric subdivisions. 

\begin{theo}
\label{maintheo}
Let $f$ be a discrete Morse function on a finite simplicial complex $K$.  Then, the second barycentric subdivision of $K$ carries Morse shellings whose critical tiles are in one-to-one correspondence with the critical faces of $f$, preserving the index.
\end{theo}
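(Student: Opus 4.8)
The plan is to induct along the filtration of $K$ by the level subcomplexes of $f$. Writing $V$ for the discrete gradient vector field of $f$, the absence of closed $V$-paths lets one order the faces of $K$ so as to obtain a filtration $\emptyset=K_0\subset K_1\subset\cdots\subset K_m=K$ by subcomplexes in which each inclusion $K_{i-1}\subset K_i$ is of exactly one of two kinds. Either it is a \emph{critical step}, $K_i=K_{i-1}\cup\bar\sigma$ for a critical $p$-face $\sigma$ of $f$ whose whole boundary complex $\partial\sigma$ already lies in $K_{i-1}$, so that $\bar\sigma\cap K_{i-1}=\partial\sigma$; or it is a \emph{collapsible step}, $K_i=K_{i-1}\cup\bar\tau$ for $\tau=V(\rho)$ with matching face $\rho\subset\tau$ of codimension one, and $\bar\tau\cap K_{i-1}=\partial\tau\smallsetminus\{\rho\}$, the $(\dim\tau-1)$-ball obtained from the boundary sphere of $\tau$ by deleting its facet $\rho$. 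Passing to second barycentric subdivisions, $\sd^2K_i=\sd^2K_{i-1}\cup\sd^2\bar\sigma$ glued along $\sd^2\partial\sigma$ in the first case, and $\sd^2K_i=\sd^2K_{i-1}\cup\sd^2\bar\tau$ glued along $\sd^2(\partial\tau\smallsetminus\{\rho\})$ in the second. The induction hypothesis I would carry is that $\sd^2K_i$ admits a Morse shelling whose critical tiles are in one-to-one correspondence, preserving the index, with the critical faces of $f$ lying in $K_i$, and which moreover restricts to a genuine Morse shelling of $\sd^2L$ for every subcomplex $L$ of the form $\partial\sigma$ or $\partial\tau\smallsetminus\{\rho\}$ along which a later step is glued --- the ``collaredness'' that makes the induction go through.

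\emph{The critical block.} At a critical step one must extend a given Morse shelling of the subdivided sphere $\sd^2\partial\sigma$ over $\sd^2\bar\sigma$ using only regular tiles together with exactly one critical tile of index $p=\dim\sigma$. Here the derived neighbourhood $C=\overline{\st}(\hat\sigma,\sd^2\bar\sigma)$ of the barycentre $\hat\sigma$ (Definition~\ref{defderived}) is the cone $\hat\sigma*\lk(\hat\sigma,\sd^2\bar\sigma)=\hat\sigma*\sd^2\partial\sigma$, a triangulated $p$-ball pinched at $\hat\sigma$, and $\sd^2\bar\sigma$ is the union of $C$ with the complementary region $A=\overline{\sd^2\bar\sigma\smallsetminus C}$, a collar of $\sd^2\partial\sigma$ which collapses onto it. One first fills $A$ by appending to the given boundary shelling the regular tiles produced by reversing a collapse $A\searrow\sd^2\partial\sigma$ (each reversed step an expansion along a ball, i.e.\ a legitimate regular shelling step), reaching $\sd^2\bar\sigma\smallsetminus\mathring C$; one then cone-shells $C$ over any shelling of the sphere $\partial C=\lk(\hat\sigma,\sd^2\bar\sigma)$, already present: the facets $\hat\sigma*F$ ($F$ a facet of $\partial C$) contribute regular tiles except for the last, whose boundary is by then in place and which is thus a single critical tile of index $p$. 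That there is exactly one such tile, of index $p$, is dictated by $H_\ast(\bar\sigma,\partial\sigma)$ being $\Z$ in degree $p$ and zero otherwise. This works relative to an \emph{arbitrary} Morse shelling of $\sd^2\partial\sigma$.

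\emph{The collapsible block.} At a collapsible step one must extend a given Morse shelling of the subdivided ball $\sd^2(\partial\tau\smallsetminus\{\rho\})$ over $\sd^2\bar\tau$ using regular tiles only. But $\bar\tau$ collapses onto $\partial\tau\smallsetminus\{\rho\}$ in a single elementary collapse --- this is exactly the elementary expansion realising the matching pair $\rho\prec\tau$ --- hence $\sd^2\bar\tau$ collapses onto $\sd^2(\partial\tau\smallsetminus\{\rho\})$, and reversing this collapse sequence and appending the corresponding free-pair expansions (each attaching along a ball, each one regular tile) extends the boundary shelling over $\sd^2\bar\tau$ with no new critical tile. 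Granting the two blocks, the theorem follows by concatenating, in the order of the filtration, the relative Morse shellings they provide: the first step of each connected component necessarily attaches a critical $0$-face, contributing one critical tile of index $0$; every further critical step contributes one critical tile of the index of its critical face; every collapsible step contributes none; and so the resulting Morse shelling of $\sd^2K=\sd^2K_m$ has critical tiles in one-to-one, index-preserving correspondence with the critical faces of $f$.

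The point requiring the two barycentric subdivisions, and where I expect the real work to lie, is making the induction hypothesis legitimate, i.e.\ ensuring that the Morse shelling produced on $\sd^2K_{i-1}$ genuinely restricts to a Morse shelling of the subdivided sphere $\sd^2\partial\sigma$ (respectively ball $\sd^2(\partial\tau\smallsetminus\{\rho\})$) along which the next block is attached, so that the block constructions can be applied. After a single subdivision the tiles abutting $\sd\partial\sigma$ need not be aligned with it and no such restriction exists in general; the second subdivision is exactly what furnishes the derived and regular neighbourhoods and collars of Rourke--Sanderson type invoked in the introduction, which render the inductively built shelling product-like near every locus along which something is later glued. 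Threading this collaring through the induction --- rather than either building block taken in isolation --- is the main obstacle; once it is secured, checking that the local models are ``regular'' versus ``critical of index $p$'' as claimed is a routine relative-homology computation, and the bijection with critical faces, preserving the index, is then immediate.
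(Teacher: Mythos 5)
Your skeleton (Forman's filtration of $K$ into critical-cell steps and free-pair steps, one block per step, one critical tile of matching index per critical cell) is the same as the paper's, but the engine that makes the induction run is missing, and you say so yourself: the ``collaredness'' hypothesis --- that the Morse shelling already built on $\sd^2K_{i-1}$ restricts to a Morse shelling of the gluing locus $\sd^2\partial\sigma$, resp.\ $\sd^2(\partial\tau\smallsetminus\rho)$ --- is exactly where all the work lies, and it is neither proved nor reducible to a routine collar argument. Worse, the way you set up the induction cannot be repaired by proving a restriction statement: you filter by $\sd^2K_i$ and keep the old tiles, but a tile of $\sd^2K_{i-1}$ whose underlying simplex lies in the gluing locus (this happens whenever $\partial\sigma$ contains facets of $K_{i-1}$, e.g.\ when a critical edge is attached to two hitherto isolated vertices) ceases to be a relative \emph{facet} once $\bar\sigma$ is added, so the final partition would not be a tiling of $\sd^2K$ in the sense of Definition~\ref{deftilings} at all. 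Your subsidiary claims also carry unproved or invalid steps: the collapse $A\searrow\sd^2\partial\sigma$ of the closure of the complement of the derived star is asserted, not established, and the homology of $(\bar\sigma,\partial\sigma)$ cannot ``dictate'' that exactly one critical tile appears --- Morse shellings may have cancelling pairs of critical tiles of consecutive indices (the paper's Theorem~\ref{theotile1}(1) produces precisely such a pair), so the count must come from the construction, not from $H_*$.

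The paper's actual mechanism, which your proposal lacks, is to change the filtration: instead of $\sd^2K_i$ one shells the first derived neighborhoods $L_i=N\big(\sd(K_i),\sd(K)\big)=\bigcup_{\sigma\in K_i}\st_{\sd^2(K)}(\hat{\hat\sigma})$ inside $\sd^2K$ (Definition~\ref{defderived}). These thickened stages have all their facets among the facets of $\sd^2K$, so every tile ever laid down is a genuine relative facet of the final complex, and no restriction property of the previously built shelling is ever needed. At each step one attaches the star(s) of the barycenter(s) of the new cell(s); by Corollary~\ref{cor2} the intersection with $L_{i-1}$ is a derived neighborhood inside the link $\lk_{\sd^2(K)}(\hat{\hat\sigma}_i)\cong\sd\big(\sd(\partial\sigma_i)*\sd(\lk_K(\sigma_i))\big)$, and one builds a \emph{fresh} Morse shelling of that link which begins with this derived neighborhood, by decomposing it into joins $T_l*T'_m$ of tiles of a shelling of $\sd(\partial\sigma_i)$ (Bj\"orner) and of a Morse shelling of $\sd(\lk_K(\sigma_i))$ (Theorem~\ref{theofirst}), and invoking Theorems~\ref{theotile1} and~\ref{theotile2}, which are engineered exactly to produce shellings of $\sd(T*T')$ starting with $N(T,T*T')$ with complete control of the critical tiles. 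Coning with apex $\hat{\hat\sigma}_i$ and deleting the bases over the derived neighborhood then extends the shelling, and Corollary~\ref{cor1} gives the exact critical-tile count (one of index $\dim\sigma_i$ in a critical step, none in a collapse step). So the two subdivisions are not used to create a collar that the inductive shelling respects, as you anticipate, but to replace the sublevel complexes by derived neighborhoods and to reduce each attachment to the join-of-tiles statements; without those ingredients the proposal does not yet constitute a proof.
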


We recall the definitions of simplicial complexes and discrete Morse functions in \S \ref{subsecprelim} and that of Morse shellings in \S \ref{subsechandledecomp}. In fact, the proof of Theorem \ref{maintheo} provides a relative version of this theorem as well, see Remark \ref{finalremark}. In the case of the trivial discrete Morse function, for which all simplices are critical, Theorem \ref{maintheo} provides a Morse shelling on the second barycentric subdivision whose critical tiles are in one-to-one correspondence with the faces of the complex. It thus gives some simplicial counterpart to the $PL$-handle decomposition of \cite{RS} in the case of combinatorial manifolds. The first barycentric subdivision already inherits Morse shellings by \cite{WelAdv}, but without control on their critical tiles. This however plays a crucial role in the proof of Theorem \ref{maintheo}, in a slightly refined form which we establish in Theorem \ref{theofirst}. 

In the case of smooth Morse functions on smooth manifolds, we deduce the following. 

\begin{cor}
\label{corsmooth}
Let $f$ be a smooth Morse function on a smooth closed manifold $M$ and let $h : K \to M$ be any $PL$-triangulation on $M$. Then, as soon as $d$ is large enough, the $d$-th barycentric subdivision of the simplicial complex $K$ carries Morse shellings whose critical tiles are in one-to-one correspondence with the critical points of $f$, preserving the index.
\end{cor}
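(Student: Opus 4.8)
The plan is to deduce Corollary~\ref{corsmooth} from Theorem~\ref{maintheo}: I would first convert the smooth Morse function $f$ into a discrete Morse function, with the same critical data, on a sufficiently fine iterated barycentric subdivision of $K$, and then apply Theorem~\ref{maintheo} to the latter. Writing $K^{(d)}$ for the $d$-th barycentric subdivision of $K$, so that $(K^{(d)})''=K^{(d+2)}$, the corollary reduces to the claim that for every $d_0$ large enough there is a discrete Morse function $g$ on $K^{(d_0)}$ whose critical faces are in one-to-one correspondence with the critical points of $f$, preserving the index.

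To establish this claim I would argue as follows. By Whitehead's theorem one may assume, after an ambient isotopy, that $h$ is a smooth triangulation, so each simplex of each $K^{(d)}$ is smoothly embedded in $M$; fixing a background metric on $M$, the mesh of $h(K^{(d)})$ tends to $0$ as $d\to\infty$. Choose $d_0$ so large that every simplex of $K^{(d_0)}$ sits inside a Morse chart around at most one critical point of $f$. Then pick a Riemannian metric $\mu$ for which $(f,\mu)$ is Morse--Smale and for which the stratification of $M$ by the unstable manifolds of $-\nabla_\mu f$ is transverse to the stratification by the open simplices of $K^{(d_0)}$ --- both conditions being generic in $\mu$ once $K^{(d_0)}$ is fixed. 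One then runs the standard combinatorialization of smooth Morse theory (in the spirit of discrete Morse theory \cite{For1,For2,Ben}, as in Gallais's combinatorial realization of the Thom--Smale complex): build a discrete vector field $V$ on $K^{(d_0)}$ modelling $-\nabla_\mu f$ by matching each non-critical simplex with the incident face or coface singled out by the flow direction, leaving unmatched exactly the simplices around each critical point that carry its Morse index. Acyclicity of $V$ follows from the strict decrease of $f$ along the flow, and the associated discrete Morse function $g$ has its critical faces in index-preserving bijection with the critical points of $f$. The same argument applies to $K^{(d)}$ for every $d\ge d_0$ (the mesh only shrinks, and one re-chooses $\mu$), so the claim holds for all such $d$.

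Granting the claim, for any $d\ge d_0+2$ I would apply Theorem~\ref{maintheo} to the finite simplicial complex $K^{(d-2)}$ equipped with the discrete Morse function $g$: its second barycentric subdivision $K^{(d)}$ carries Morse shellings whose critical tiles are in one-to-one correspondence with the critical faces of $g$, preserving the index. Composing with the bijection of the claim gives the required correspondence between the critical tiles and the critical points of $f$, with equal indices, for all $d\ge d_0+2$; this is the assertion ``as soon as $d$ is large enough''.

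This last step is routine --- a black-box use of Theorem~\ref{maintheo} plus the identity $(K^{(d-2)})''=K^{(d)}$. The substantive step, and the main obstacle I anticipate, is the claim: one needs a discrete Morse function with \emph{exactly} the critical faces of $f$, not merely a handle decomposition of the right homotopy type, and since barycentric subdivisions are rigid one cannot pre-adapt the triangulation to a chosen gradient field. The remedy is to fix a fine enough $K^{(d_0)}$ first and only then choose the metric generically; the delicate points that remain are the global acyclicity of the ``downstream'' matching and the precise count and indices of the critical simplices, which is where the combinatorialization of smooth Morse theory does its work.
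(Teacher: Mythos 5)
Your overall route is exactly the paper's: reduce Corollary \ref{corsmooth} to the existence, on a sufficiently fine barycentric subdivision $\sd^{d}(K)$, of a discrete Morse function whose critical faces are in index-preserving bijection with the critical points of $f$, and then feed this into Theorem \ref{maintheo} using $\sd^{d+2}(K)=\sd^2(\sd^d(K))$. The paper disposes of the substantive claim in one line by citing Theorem A of \cite{Ben} (see also \cite{Gal}), and your final black-box step is identical. The gap is in your attempt to prove that claim yourself. The ``standard combinatorialization'' you sketch is not a proof: the rule ``match each non-critical simplex with the incident face or coface singled out by the flow direction'' is not well defined on an arbitrary fine barycentric subdivision (a generic gradient field does not canonically select such a pairing, and making any such selection globally consistent is the whole difficulty); acyclicity does not follow from the strict decrease of $f$ along the flow, because a discrete $V$-path moves at the scale of simplices and need not track level sets; and, most importantly, the assertion that the unmatched cells are \emph{exactly} one cell of dimension $\mathrm{ind}(p)$ per smooth critical point $p$ is precisely the theorem to be proved --- a flow-induced matching produced naively will in general leave extra critical cells, and eliminating them is where all the work lies.

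Note also that the results you gesture at do not prove the claim by your mechanism: Benedetti's Theorem A is obtained via the smooth handle decomposition of $(M,f)$ together with collapsibility statements for sufficiently subdivided triangulated balls, not by reading a matching off $-\nabla_\mu f$ on a prescribed subdivision; and Gallais's construction requires a triangulation specially adapted to a Morse--Smale pair, which an iterated barycentric subdivision of a given $K$ is not --- indeed your own remark that ``one cannot pre-adapt the triangulation'' is exactly why the naive transfer is delicate. The fix is simple: replace your second paragraph by a direct invocation of Theorem A of \cite{Ben} (with \cite{Gal} as a complement), after which your argument coincides with the paper's proof.
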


Recall that every smooth manifold carries a canonical piecewise linear structure from Whitehead's theorem, see \cite{Whi1} and \S \ref{subsechandledecomp}. We also deduce.

\begin{cor}
\label{corcollapse}
The second barycentric subdivision of any collapsible finite simplicial complex carries Morse shellings using one closed simplex as unique critical tile.
\end{cor}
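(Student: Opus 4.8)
The plan is to obtain Corollary \ref{corcollapse} as a direct consequence of Theorem \ref{maintheo}; essentially the only thing to supply beforehand is a discrete Morse function on a collapsible complex with a single critical face. So I would first recall the classical bridge between collapsibility and discrete Morse theory. Given a collapsible finite simplicial complex $K$, choose a sequence of elementary collapses reducing $K$ to a point $v$; at each step a free face $\tau$ is discarded together with the unique face $\sigma$ of dimension $\dim\tau+1$ containing it. Recording the pairs $(\tau,\sigma)$ yields an acyclic partial matching on the face poset of $K$ leaving exactly one face, the vertex $v$, unmatched. By Forman's dictionary between acyclic matchings and discrete Morse functions \cite{For1,For2}, this matching is the gradient of a discrete Morse function $f$ on $K$ in the sense of \S \ref{subsecprelim}, whose unique critical face is $v$, necessarily of index $0$.

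I would then feed $f$ into Theorem \ref{maintheo}: the second barycentric subdivision of $K$ carries Morse shellings whose critical tiles are in one-to-one correspondence with the critical faces of $f$, preserving the index. Since $f$ has a single critical face, of index $0$, each such Morse shelling has a single critical tile, of index $0$. Finally, by the description of critical tiles recalled in \S \ref{subsectiles}, a critical tile of index $0$ is exactly a closed simplex, which gives precisely the stated conclusion.

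I do not expect a genuine obstacle here: all the analytic content sits in Theorem \ref{maintheo}, and the two remaining ingredients — turning a collapse into a discrete Morse function with one critical face, and recognising that an index-$0$ critical tile is a closed simplex — are standard, coming respectively from discrete Morse theory and from the definitions of \S \ref{subsectiles}. The one mild point worth verifying is that Forman's construction yields a discrete Morse function conforming to the conventions fixed in \S \ref{subsecprelim}, so that Theorem \ref{maintheo} applies verbatim; this is routine.
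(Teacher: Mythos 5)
Your proposal is correct and follows essentially the same route as the paper, which simply invokes Lemma 4.3 of Forman's paper (rather than re-deriving the acyclic matching from the collapse sequence) to get a discrete Morse function with a single critical face of index $0$, and then applies Theorem \ref{maintheo}. The only slight imprecision is your claim that an index-$0$ critical tile is \emph{exactly} a closed simplex — in the paper's conventions the dotted simplex $\dot{\sigma}=\sigma\setminus\{\emptyset\}$ is also critical of vanishing index — but the shelling constructed in the proof of Theorem \ref{maintheo} does produce a genuine closed simplex for the first (here unique) critical face, so the conclusion stands, at the same level of detail as the paper's own two-line proof.
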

This result  applies in particular to regular neighborhoods of collapsible subcomplexes in closed triangulated manifolds and provides thus some counterpart to Whitehead's regular neighborhood theorem \cite{Whi,RS} up to which these are $PL$-balls in the case of combinatorial manifolds.

Theorem \ref{maintheo} combined with \cite{SaWel2}, see also Theorem $5.2$ of \cite{WelHHA}, closely relates Morse shellings with discrete Morse functions. In particular, the minimal number of critical points of discrete Morse functions on all barycentric subdivisions of a finite simplicial complex coincides with the minimal number of critical tiles of their Morse shellings. How much of the topology and combinatorics of a simplicial complex does a shelled $h$-tiling encodes remains puzzling, see \S \ref{subsechandledecomp}. Every Morse shelling can be turned into some shelled $h$-tiling after finitely many stellar subdivisions at facets by \cite{WelHHA}, but adding critical tiles in the process. 

We recall in section \ref{sectiles} what we need from the theories of simplicial complexes and discrete Morse functions. We also introduce Morse tiles, describe them as pinched handles and study their structure and first barycentric subdivision. Section \ref{secshellings} is mostly devoted to the proof of Theorem \ref{maintheo} and its corollaries. We first introduce Morse shellings and prove Corollaries \ref{corsmooth}, \ref{corcollapse}. We then prove the existence of specific Morse shellings on first barycentric subdivisions of Morse tiles, see Theorems \ref{theotile1} and \ref{theotile2},  and of simplicial complexes, see Theorem \ref{theofirst}. The latter slightly refines earlier results of \cite{SaWel2, WelAdv} for the need of Theorem \ref{maintheo}.

\section{Morse tiles}
\label{sectiles}

\subsection{Relative simplicial complexes and discrete Morse functions}
\label{subsecprelim}

As in \cite{WelAdv}, let us first recall what we need from the theory of simplicial complexes, see \cite{Koz,Munk}. From the combinatorial point of view, an {\it $n$-simplex} $\sigma$ is a set of cardinality $n+1$ whose elements are called {\it vertices}. Any subset of this finite set, including the empty set, is called a {\it face}. Its {\it geometric realization} is the convex set
$\vert \sigma \vert = \{ \lambda : \sigma \to \R^+ \, \vert \, \sum_{v \in \sigma} \lambda (v) = 1 \}$, it spans the $n$-dimensional real affine space 
$A_{\sigma} = \{ \lambda : \sigma \to \R \, \vert \, \sum_{v \in \sigma} \lambda (v) = 1 \}$. Likewise, from the combinatorial point of view, a {\it finite simplicial complex} $K$ is a collection of subsets of a finite set $V_K$ which contains all singletons and all subsets of its elements. The elements of $K$ are simplices and any simplex defines itself a finite simplicial complex. The  {\it geometric realization} of a finite simplicial complex $K$ is the subset $\vert K \vert = \{ \lambda : V_K \to \R^+ \, \vert \, \sum_{v \in V_K} \lambda (v) = 1 \text{ and } \text{supp} (\lambda) \in K \}$ of $A_{V_K}$, where $\text{supp} (\lambda) = \{ v \in V_K \, \vert \, \lambda (v) \neq 0 \}$. This topological space is then covered by the geometric realizations of all the simplices of $K$ which are maximal with respect to the inclusion, called {\it facets}, and moreover, any two simplices intersect along a unique common face, possibly empty. A face which has codimension one in any facet containing it is called a  {\it ridge}. The {\it dimension} of a simplicial complex $K$  is the maximal dimension of its facets and when they all have same dimension, $K$ is said to be {\it pure dimensional}. 
We denote by $K^{[p]}$ the subset of $p$-simplices of $K$, $p \geq 0$.
When $\vert K \vert $ turns out to be homeomorphic to a closed topological manifold, it is said to be a closed {\it triangulated manifold}.
This condition is less restrictive than that of combinatorial or piecewise-linear manifold, see \S \ref{subsecstar}. Note that any function from $V_K$ to some real affine space $E$ extends to an affine map $A_{V_K} \to E$ which restricts to $\vert K \vert$ and when this restriction is injective, it embeds $\vert K \vert$ into $E$. For example, the boundary of any convex simplicial polytope of $\R^n$ is the geometric realization of a triangulated sphere, embedded into $\R^n$.
This notion of simplicial complex has some relative counterpart introduced by R. Stanley in \cite{Stan3}, see also \cite{Stanbook,CKS}.
A {\it relative simplex} $P$ is a simplex $\overline{P}$ deprived of several of its proper faces $\tau_0, \dots , \tau_k$. A face of $P$ is a relative simplex $\tau \setminus (\tau_0 \cup \dots \cup \tau_k)$, where $\tau$ is a face of its underlying simplex $\overline{P}$ not contained in $\tau_0 \cup \dots \cup \tau_k$, and its dimension is the dimension of $\tau$. 
The geometric realization of $P$ is the complement $\vert P \vert = \vert \overline{P} \vert \setminus (\vert  \tau_0 \vert  \cup \dots \cup \vert  \tau_k \vert )$, while from the combinatorial point of view, $\tau_0, \dots , \tau_k$ and their faces are no more faces of $P$. We will call them the {\it missing faces} of $P$. Some relative simplices are of special interest for us and we devote the remaining part of section \ref{sectiles} to their study, see Definition \ref{deftiles}. 
A {\it relative finite simplicial complex} $S$ is a collection of relative simplices $\{ \sigma \setminus (\sigma \cap L) \, \vert \, \sigma \in K \}$, where $L$ is a subcomplex of a finite simplicial complex $K$. We may assume the subcomplex $L$ of $K$ not to contain any facet of $K$, deleting them from $K$ and $L$ otherwise.
We set $\overline{S}=K$ and call with some abuse this complex the {\it underlying simplicial complex} of $S$, while the faces of $L \cap \overline{S}= \overline{S} \setminus S$ are called the {\it missing faces} of $S$. Contrary to the case of relative simplices, the pair $(K,L)$ such that $S = K \setminus L$ is not unique. The geometric realization of $S$ is the complement $\vert S \vert = \vert \overline{S} \vert \setminus \vert L \vert$. A {\it relative subcomplex} $S'$ of $S$ is a relative complex $K' \setminus L$, where $K'$ is a subcomplex of $K$.

The {\it first barycentric} subdivision $\sd (K)$ of a finite simplicial complex $K$ is a collection of sets $\{ \sigma_0 , \dots , \sigma_q \}$ of elements of $K$ such that $\emptyset \neq \sigma_0 \subsetneq \sigma_1 \subsetneq \dots \subsetneq \sigma_q$, so that $V_{\sd (K)} = K \setminus \{ \emptyset \}$. The map $\sigma \in K \setminus \{ \emptyset \} \mapsto \hat{\sigma} \in \vert K \vert \subset A_{\sigma}$, where $\hat{\sigma}$ denotes the barycenter of $\vert \sigma \vert$, defines by extension an homeomorphic embedding $\vert \sd (K) \vert \to \vert K \vert$, see Proposition $2.33$ of \cite{Koz}. To avoid any confusion, we will denote by $\hat{\sigma}$ the vertex of $\sd (K)$  associated to the simplex $\sigma$ of $K$.
The first barycentric subdivision of a finite relative simplicial complex $S=K \setminus L$ is the relative simplicial complex $\sd (S) = \sd (K) \setminus \sd (L)$. The {\it join} $\sigma_1 * \sigma_2$
of two simplices $\sigma_1 , \sigma_2$  is the simplex $\sigma_1 \cup \sigma_2$. Its geometric realization gets isomorphic to the set of convex combinations $\{ tx_1 + (1-t)x_2 \; \vert \; x_i \in \vert \sigma_i \vert \text{ and } t \in [0,1] \}$, see \cite{RS}. Likewise, the join $K_1 * K_2$
of two finite simplicial complexes is the collection  of joins $\{ \sigma_1 * \sigma_2 \; \vert \; \sigma_i \in K_i \}$, where by convention $\emptyset * \sigma = \sigma * \emptyset = \sigma$. The join $S_1 * S_2$ of two finite relative simplicial complexes $S_1 = K_1 \setminus L_1$ and $S_2 = K_2 \setminus L_2$ is the finite relative simplicial complex $(K_1 * K_2) \setminus \big( (L_1 * K_2) \cup (K_1 * L_2) \big)$. Finally, a subcomplex $S'$ of a finite relative simplicial complex $S$ is an {\it elementary collapse} of $S$ iff $S \setminus S'$ consists of a facet of $S$ together with one of its ridges not contained in any other facet of $S$. It is a {\it collapse} of $S$ if it can be obtained from $S$ after a finite sequence of elementary collapses, see for instance \cite{AdipBen,For1}. This notion is closely related to the discrete Morse theory of R. Forman \cite{For1}. A {\it discrete Morse function} $f$ on a finite simplicial complex $K$ is a function $f : K \setminus \{ \emptyset \} \to \R$ such that for every simplex $\sigma \in K$, the sets $\{ \tau \in K \; \vert \; \sigma \subsetneq \tau \text{ and } f(\sigma) \geq f(\tau) \}$ and $\{ \theta \in K \; \vert \; \theta \subsetneq \sigma \text{ and } f(\sigma) \leq f(\theta) \}$ have cardinalities at most one. Of special interest are the Morse functions which are {\it monotone} in the sense that $f(\sigma) \leq f(\tau) $ whenever $\sigma \subset \tau $, {\it semi-injective} in the sense that all of their preimages have cardinalities at most two and {\it generic} in the sense that an equality $f(\sigma) = f(\tau) $ implies that $\sigma$ is a face of $\tau$ or vice-versa. A simplex $\sigma \in K$ at which such a discrete Morse function $f$ is injective is called a {\it critical face} of $f$ of index $\dim (\sigma)$. The sublevel sets $\{ \sigma \in K \; \vert \; f(\sigma) \leq m \}$, $m \in \R$, are subcomplexes of $K$ and one deduces a sublevel set from the next one either by deleting a critical face, that is removing it from the complex, or by an elementary collapse. It follows from Theorems $3.3$ and $3.4$ of \cite{For1} that every discrete Morse function can be perturbed into such a monotone, semi-injective and generic one, so that one can restricts oneself to the the latter without much loss of generality, as is done in \cite{Ben} for instance. 

\subsection{Morse tiles versus pinched handles}
\label{subsectiles}

The following two families of relative simplices are in the core of our study, they were introduced in \cite{SaWel1, SaWel2}.

\begin{defi}
\label{deftiles}
A basic (resp. Morse) tile of dimension $n$ and order $k \in \{0 , \dots , n+1 \}$ is an $n$-simplex deprived of $k$ ridges (resp. together with possibly a unique face of higher codimension, called its Morse face). 
\end{defi}

If $T$ is a basic tile of order $k = \ord (T)$, then every non-empty face $\mu$ of $T$ has to contain the $(k-1)$-dimensional face $r(T)$, called its {\it restriction set}, whose missing vertices are opposite to the missing ridges of $T$, compare \cite{Stanbook}.  The tile $T \setminus \mu$  is said to be {\it critical}  of index $k = \ind (T \setminus \mu)$ when $\mu = r(T)$ and it is said to be {\it regular} otherwise. The closed simplex deprived of its empty face, denoted by $\dot{\sigma} = \sigma \setminus \{ \emptyset \}$, is thus critical of vanishing index, but we also consider the closed simplex itself as being critical of vanishing index, though it differs from $\dot{\sigma}$ as a relative simplex. As for the open simplex, denoted by $\osigma$, it is critical of index $\dim (\sigma)$ and when the latter vanishes, $\osigma$ and $\dot{\sigma}$ coincide.

 \begin{figure}[h]
   \begin{center}
    \includegraphics[scale=1]{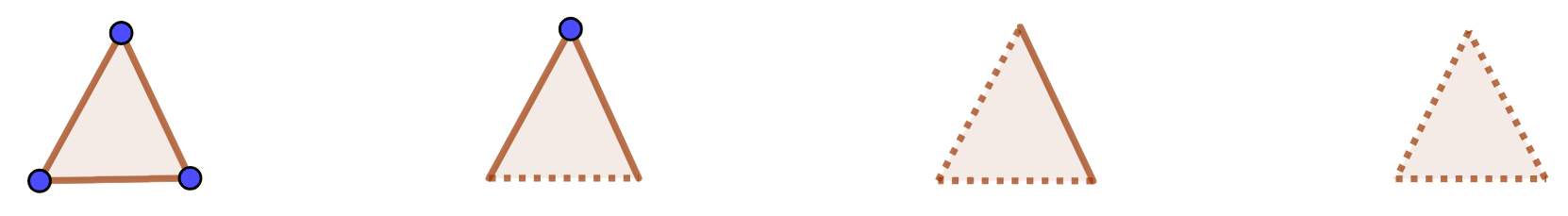}
    \caption{The basic tiles in dimension two.}
    \label{figbasictiles}
      \end{center}
 \end{figure}

 \begin{figure}[h]
   \begin{center}
    \includegraphics[scale=1]{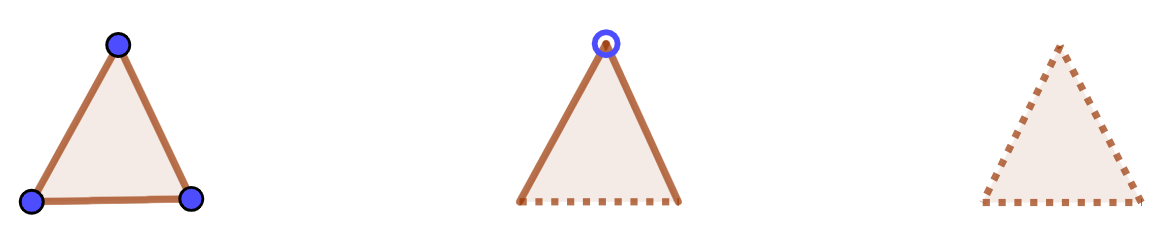}
    \caption{The critical tiles in dimension two.}
    \label{figbasictiles}
      \end{center}
 \end{figure}

From the topological viewpoint, an $n$-dimensional critical tile of index $k \in \{ 0 , \dots  , n\}$ is a simplicial {\it pinched handle} of dimension $n$ and index $k$. It is obtained by pinching onto $\mu$ the missing face $\mu \times \theta$ of a piecewise linear handle $\osigma \times \theta$, where $\sigma$ is a $k$-simplex, $\theta$ an $(n-k)$-simplex and $\mu$ a ridge of $\sigma$. The projection onto the first factor $\sigma \times \theta \to \sigma = \mu * v$, where $v$ denotes the vertex opposite to $\mu$ in $\sigma$, indeed factors through the join $\mu * \theta$, the first arrow of the diagram $\sigma \times \theta \to \mu * \theta \stackrel{p}{\to} \sigma$ pinching $\mu \times \theta$ onto $\mu$. By removing the preimage of the boundary of $\sigma$, we deduce the diagram
$\osigma \times \theta \to C^n_k {\to} \osigma$, where $C^n_k = ( \mu * \theta) \setminus p^{-1} (\partial \sigma)$ is a critical tile of index $k$.

Likewise, an $n$-dimensional regular basic tile is an $n$-simplex deprived of a union of ridges defining an $(n-1)$-dimensional piecewise linear ball embedded in its boundary, whereas for a non-basic regular Morse tile, the latter gets pinched as well. Let indeed $T$ be a non basic regular Morse tile of dimension $n$ and order $k$, whose Morse face $\mu$ has dimension $l \geq k$. Let $\sigma = \mu * v$ be an $(l+1)$-simplex and $\tau$ a union of $k+1$ ridges of $\sigma$ containing $\mu$, so that $\sigma \setminus \tau$ is a regular basic tile. Let finally $\theta$ be an $(n-l-1)$-simplex.
By depriving the preimage of $\tau$ of the diagram $\sigma \times \theta \to \mu * \theta {\to} \sigma$, we deduce the map $(\sigma \setminus \tau) \times \theta \to T$ which pinches the missing face 
$\mu \times \theta$ onto $\mu$, whereas $\tau \times \theta$ is  an $(n-1)$-dimensional $PL$-ball embedded in the boundary of $\sigma \times \theta$.

\begin{lemma}
\label{lemma1}
Let $T$ be a basic tile of order $k$ and $T'$ be a basic (resp. Morse) tile of order $k'$. Then, $T * T'$ is a basic (resp. Morse) tile of order $k+k'$.
\end{lemma}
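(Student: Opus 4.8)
The plan is to reduce the statement to the combinatorics of missing faces and then to apply the definition of the join of relative simplicial complexes. First I would fix notation: write $T = \sigma \setminus L$, where $\sigma = \overline{T}$ is an $n$-simplex and $L = \rho_1 \cup \dots \cup \rho_k$ is the union of the $k$ pairwise distinct missing ridges of $T$, say $\rho_i = \sigma \setminus \{w_i\}$ for pairwise distinct vertices $w_1, \dots, w_k$ of $\sigma$. Likewise I would write $T' = \sigma' \setminus (L' \cup \mu')$, where $\sigma' = \overline{T'}$ is an $n'$-simplex with vertex set disjoint from $\sigma$, $L' = \rho'_1 \cup \dots \cup \rho'_{k'}$ with $\rho'_j = \sigma' \setminus \{w'_j\}$, and $\mu'$ is empty in the basic case and equals the Morse face of $T'$ -- a single face of $\sigma'$ of codimension at least $2$ -- in the non-basic case. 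Since the missing faces of a relative simplex are precisely the faces of its missing subcomplex, it suffices to identify the underlying simplex and the missing subcomplex of $T * T'$.

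By the definition of the join one has $\overline{T * T'} = \sigma * \sigma'$, an $(n + n' + 1)$-simplex, with missing subcomplex $(L * \sigma') \cup \bigl(\sigma * (L' \cup \mu')\bigr)$. Next I would distribute the join over the unions, $L * \sigma' = \bigcup_{i=1}^{k}(\rho_i * \sigma')$ and $\sigma * L' = \bigcup_{j=1}^{k'}(\sigma * \rho'_j)$, and observe that $\rho_i * \sigma' = (\sigma * \sigma') \setminus \{w_i\}$ and $\sigma * \rho'_j = (\sigma * \sigma') \setminus \{w'_j\}$ are ridges of $\sigma * \sigma'$. Since $\sigma$ and $\sigma'$ have disjoint vertex sets and the $w_i$ (resp. the $w'_j$) are pairwise distinct, the $k + k'$ vertices $w_1, \dots, w_k, w'_1, \dots, w'_{k'}$ are pairwise distinct, hence these $k + k'$ ridges are pairwise distinct; and $k + k' \leq (n+1) + (n'+1) = (n + n' + 1) + 1$, so this is an admissible order. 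In the basic case ($\mu' = \emptyset$) this exhausts the missing subcomplex, so $T * T'$ is $\sigma * \sigma'$ deprived of $k + k'$ distinct ridges, i.e. a basic tile of order $k + k'$.

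In the non-basic case I would then handle the one remaining missing face, $\sigma * \mu'$: its codimension in $\sigma * \sigma'$ equals that of $\mu'$ in $\sigma'$, hence is at least $2$, so $T * T'$ is $\sigma * \sigma'$ deprived of $k + k'$ distinct ridges together with a unique face of higher codimension, that is, a Morse tile of order $k + k'$ with Morse face $\sigma * \mu'$. (If $\sigma * \mu'$ happened to be contained in the union of those ridges, $T * T'$ would just be a basic tile of order $k + k'$, which is a fortiori a Morse tile of that order, so no case distinction is really needed.) I do not expect any genuine obstacle here -- the lemma is in essence a bookkeeping exercise. The only points that deserve care are reading ``deprived of a face'' as ``deprived of that face and all its subfaces'', applying the join formula $(K_1 * K_2) \setminus \bigl((L_1 * K_2) \cup (K_1 * L_2)\bigr)$ correctly to relative simplices, and checking the degenerate case $k = 0$ -- where $L = \emptyset$ and $T = \sigma$ is the closed simplex -- against the conventions of \S \ref{subsectiles}.
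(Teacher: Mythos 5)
Your proof is correct and follows essentially the same route as the paper: both compute the join of the two relative simplices via the formula $(K_1*K_2)\setminus\bigl((L_1*K_2)\cup(K_1*L_2)\bigr)$, identify the missing faces $\rho_i*\sigma'$ and $\sigma*\rho'_j$ as $k+k'$ distinct ridges of $\sigma*\sigma'$ and $\sigma*\mu'$ as the (possible) Morse face of higher codimension. You merely spell out the bookkeeping (distinctness of the ridges, the bound on the order, the codimension of $\sigma*\mu'$) that the paper's one-line computation leaves implicit.
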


\begin{proof}
Set $T = \sigma \setminus (\sigma_0 \cup \dots \cup \sigma_{k-1})$ and $T' = \sigma' \setminus (\sigma'_0 \cup \dots \cup \sigma'_{k'-1} \cup \mu)$, where $\sigma_0 , \dots , \sigma_{k-1}$ (resp. 
$\sigma'_0 , \dots , \sigma'_{k'-1}$) are ridges of the simplex $\sigma$ (resp. $\sigma'$) and $\mu$ is a higher codimensional face of $\sigma'$, possibly empty. Then, 
$T * T' = (\sigma * \sigma') \setminus \big( \cup_{i=0}^{k-1} (\sigma _i* \sigma') \cup  \cup_{j=0}^{k'-1}  (\sigma * \sigma'_j)  \cup (\sigma * \mu) \big)$ is a Morse tile of order $k+k'$ which is basic if and only if $\mu$ is empty, that is iff $T'$ is basic.
\end{proof}
For instance, the cone $v * T$ with apex $v$ over a basic tile $T$ of order $k$ is itself a basic tile of order $k$, whereas the cone deprived of its basis $\dot{v} * T$  is a basic tile of order $k+1$.

\begin{prop}
\label{prop1}
Every Morse tile splits uniquely as a join $\sigma * \otheta * \dtau$, where $\sigma$ is a closed simplex, $\otheta$ an open one and $\dtau$ a closed simplex of non-vanishing dimension deprived of its empty face. Conversely, every such join is a Morse tile, which is critical different from a closed simplex (resp. basic) if and only if $\sigma$ (resp. $\tau$) is empty. 
\end{prop}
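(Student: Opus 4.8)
The plan is to read the three simplices $\sigma,\theta,\tau$ off the combinatorial data defining a Morse tile, and conversely to describe the faces of such a join by a direct face‑by‑face computation. \emph{Existence.} Let $T$ be a Morse tile with underlying simplex $\overline{T}$ and order $k=\ord(T)$, and let $\theta$ be its restriction set, that is the $(k-1)$-simplex spanned by the vertices of $\overline{T}$ opposite to the $k$ missing ridges, so that the faces of $T$ are exactly the faces of $\overline{T}$ containing $\theta$, with at most one of them —the Morse face $\mu$— further removed. As a face not containing $\theta$ is already missing, we may assume $\mu\supseteq\theta$ when $T$ is not basic; I then write $\mu=\theta*\sigma$ and $\overline{T}=\sigma*\theta*\tau$, where $\sigma$ is spanned by the vertices of $\mu$ outside $\theta$ and $\tau$ by the remaining ones, and I set $\tau=\emptyset$ (with $\sigma$ spanned by all the vertices of $\overline{T}$ outside $\theta$) when $T$ is basic. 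A face of $\overline{T}$ decomposes uniquely along this join as $c_\sigma\cup c_\theta\cup c_\tau$, and a direct check shows it belongs to $T$ precisely when $c_\theta=\theta$ and $c_\tau\neq\emptyset$, the latter condition being vacuous if $\tau=\emptyset$; this is exactly the condition defining the faces of $\sigma*\otheta*\dtau$, whence $T=\sigma*\otheta*\dtau$. That $\tau$ is either empty or of positive dimension follows from $\mu$ having codimension at least two in $\overline{T}$, since $\overline{T}=\sigma*\theta*\tau$ and $\mu=\sigma*\theta$ differ in dimension by $\dim\tau+1$.

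\emph{Uniqueness.} Conversely, given a splitting $T=\sigma*\otheta*\dtau$ as in the statement, each of $\overline{T},\theta,\sigma,\tau$ is determined by $T$ alone: $\overline{T}$ is the unique facet of $T$; the simplex $\theta$ is the intersection of all the faces of $T$, since a vertex of $\sigma$ or of $\tau$ can always be dropped from some face whereas a vertex of $\theta$ cannot; and the simplex $\sigma*\theta$ is the largest missing face of $T$ containing $\theta$ when $T$ is not basic, while it equals $\overline{T}$ itself when $T$ is basic, so that $\sigma$, and then the face $\tau$ complementary to $\sigma*\theta$ in $\overline{T}$, are recovered in both cases. As these recipes involve $T$ only, any two splittings coincide.

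\emph{The converse and the two criteria.} Fix simplices $\sigma,\theta,\tau$ with $\tau$ empty or of positive dimension, and compute the missing faces of $P:=\sigma*\otheta*\dtau$: a face $c_\sigma\cup c_\theta\cup c_\tau$ of $\overline{P}=\sigma*\theta*\tau$ is missing if and only if $c_\theta\subsetneq\theta$ or $c_\tau=\emptyset$. Hence the missing ridges of $P$ are exactly those opposite the vertices of $\theta$: when $\tau=\emptyset$ this says $P$ is the basic tile of order $\dim\theta+1$ whose faces are the faces of $\overline{P}$ containing $\theta$, while when $\tau\neq\emptyset$ it says $P$ is that same basic tile further deprived of the single higher‑codimensional face $\sigma*\theta$, which is a genuine Morse face precisely because $\dim\tau\geq 1$ forces its codimension to be at least two. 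So $P$ is always a Morse tile; it is basic if and only if $\tau=\emptyset$; and, when non‑basic, its Morse face $\sigma*\theta$ equals its restriction set $\theta$ —i.e. $P$ is critical— if and only if $\sigma=\emptyset$, the closed simplex being the remaining case $\theta=\tau=\emptyset$ and the open simplex the case $\sigma=\tau=\emptyset$, for which both criteria hold. The step I expect to cost the most care is the uniqueness argument: one must find an intrinsic way to separate the "closed" factor $\sigma$ from the "free" factor $\tau$ inside $\overline{T}\setminus\theta$ —the observation that only $\sigma*\theta$ persists as a missing face does this— and then verify all the degenerate configurations in which one or several of $\sigma,\theta,\tau$ vanish.
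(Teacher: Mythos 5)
Your proposal is correct and follows essentially the same route as the paper: you identify $\otheta$ with the restriction set and $\sigma*\theta$ with the Morse face (the complementary face giving $\tau$), and you verify the converse by the same explicit computation of the missing faces of the join $\sigma * \otheta * \dtau$. Your uniqueness step, recovering $\theta$ as the intersection of all faces and $\sigma*\theta$ as the largest missing face containing $\theta$, is just an intrinsic rephrasing of the paper's observation (cf.\ Remark \ref{remrt}) that $\otheta$ must be $r(T)$ and $\sigma*\otheta$ must be the Morse face.
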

Each factor of the decomposition $\sigma * \otheta * \dtau$ of a Morse tile may be empty, that is missing. If the tile is a closed simplex, an open one or a dotted one for instance, only one factor enters the decomposition. The condition on the dimension of $\tau$ is required to get the uniqueness of the decomposition. Observe that the index of $\otheta$ as a critical tile is $\dim (\theta)$, whereas the one of $\otheta * \dtau$ is $\dim (\theta) + 1$ as soon as $\tau$ is not empty. These two formulas coincide though when $\tau$ is empty if one writes $\otheta = \stackrel{\circ}{\theta'} * \dtau'$, with $\dim (\tau')=0$.

\begin{proof}
Let $T$ be a relative simplex which splits as a join $\sigma * \otheta * \dtau$ and let  $\theta_0 , \dots , \theta_{k-1}$  be the ridges of $\theta$, so that $k = \dim (\theta) + 1$. Then, if $\tau$ is not empty, $T = (\sigma * \theta * \tau) \setminus \big( \cup_{i=0}^{k-1} (\sigma * \theta_i * \tau) \cup (\sigma * \theta) \big)$, where $\sigma * \theta = \{ \emptyset \}$ if $\sigma$ and $\theta$ are empty, so that it is a non-basic Morse tile of order $k$ with Morse face $\sigma * \otheta$. It is thus critical in this case if and only if $\dim (\sigma * \otheta) = k-1$, that is iff $\sigma$ is empty. If $\tau$ is empty, $T = (\sigma * \theta) \setminus \big( \cup_{i=0}^{k-1} (\sigma * \theta_i) \big)$ is a basic tile of order $k$ which is thus critical iff either $\sigma$ or $\theta$ is empty.

Conversely, let $T$ be an $n$-dimensional Morse tile of order $k$, so that $T = \lambda \setminus (\lambda_0 \cup \dots \cup \lambda_{k-1} \cup \mu)$, where $\lambda_0 , \dots , \lambda_{k-1}$ are ridges of the $n$-simplex $\lambda$ and $\mu$ some higher codimension face of $\lambda$, possibly empty. Let $\otheta = r(T)$ be the restriction set of $T$, whose vertices are the vertices opposite to $\lambda_0 , \dots , \lambda_{k-1}$. The face $\mu$, provided it is not empty, contains $r(T)$ and can thus be written $\sigma * \otheta$ for some face $\sigma$ of $\lambda_0 \cap \dots \cap \lambda_{k-1}$, possibly empty. Denoting by $\tau$ the link of $\sigma * \theta$ in $\lambda$, that is the face whose vertices are the ones not contained in $\sigma * \theta$, we deduce that
$T = \sigma * \otheta * \dtau$. This decomposition has to be unique, since $\otheta = r(T)$ and $\mu = \sigma * \otheta$.
\end{proof}

\begin{rem}
\label{remrt}
We saw in the proof of Proposition \ref{prop1} that the uniqueness of the decomposition $T = \sigma * \otheta * \dtau$ follows from the fact that $\otheta$ has to be the restriction set of $T$ and $\sigma * \otheta$ has to be its Morse face as soon as $\tau$ is not empty, that is as soon as $T$ is not basic.
\end{rem}

\begin{cor}
\label{cor1}
Let $T$ be a Morse tile and $v$ a vertex. Then, the cone $v*T$ is a closed simplex if and only if $T$ is a closed simplex and a regular Morse tile otherwise. Likewise, the cone deprived of its basis $\dot{v} * T$ is a Morse tile which is critical iff $T$ is critical different from a closed simplex and its index equals then $\ind (T) + 1$.
\end{cor}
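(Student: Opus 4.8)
The plan is to reduce both statements to the canonical decomposition of Proposition \ref{prop1}. Fix a vertex $v$ outside the underlying simplex of $T$ and write $T = \sigma * \otheta * \dtau$ as in Proposition \ref{prop1}, each of the three factors possibly missing. Joining with $v$ is associative, so $v * T = (v * \sigma) * \otheta * \dtau$, and $v * \sigma$ is again a closed simplex, now necessarily non-empty. By Proposition \ref{prop1} this join is a Morse tile, and the closed factor of its canonical decomposition is $v*\sigma \neq \emptyset$; hence $v*T$ is never critical different from a closed simplex. It is a closed simplex exactly when $\otheta$ and $\dtau$ are both missing, that is when $T = \sigma$ is a closed simplex, and then $v*T = v*\sigma$ is indeed one. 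In every other case $T$ is not a closed simplex and $v*T$, being a Morse tile that is neither a closed simplex nor critical, is regular. This settles the first assertion.

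For the cone deprived of its basis, note that $\dot v$ is the open $0$-simplex supported on $v$, hence a basic tile of order one; by Lemma \ref{lemma1}, $\dot v * \otheta$ is then the open simplex on $\{v\}*\theta$ (with the convention $\{v\}*\theta = \{v\}$ if $\theta$ is missing), and therefore $\dot v * T = \sigma * \stackrel{\circ}{(\{v\}*\theta)} * \dtau$. Proposition \ref{prop1} shows this is a Morse tile; its canonical open factor $\stackrel{\circ}{(\{v\}*\theta)}$ is never empty, so $\dot v * T$ is never a closed simplex; and it is critical if and only if its closed factor $\sigma$ is empty, which by Proposition \ref{prop1} means precisely that $T$ is critical different from a closed simplex. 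When this occurs, the index formulas recorded after Proposition \ref{prop1} give $\ind(\dot v * T) = \dim(\{v\}*\theta) = \dim\theta + 1$ if $\tau$ is missing and $\dim(\{v\}*\theta) + 1 = \dim\theta + 2$ otherwise; since $\ind(T)$ equals $\dim\theta$ in the first case and $\dim\theta + 1$ in the second, we get $\ind(\dot v * T) = \ind(T)+1$ throughout.

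The computations themselves are direct substitutions into Proposition \ref{prop1} and Lemma \ref{lemma1}; the only place demanding attention is the borderline case $\dim\theta = 0$, where $\otheta$ is the dotted vertex rather than a genuine open simplex and one must make sure it still counts as ``critical different from a closed simplex'', using the convention recalled just after Definition \ref{deftiles}, and that the index bookkeeping stays consistent when one rewrites $\otheta = \stackrel{\circ}{\theta'}*\dtau'$ with $\dim\tau' = 0$ as in the remarks following Proposition \ref{prop1}. I expect this bookkeeping around the conventions for closed, dotted and open simplices to be the main (and essentially only) subtlety.
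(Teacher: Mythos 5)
Your proof is correct and takes essentially the same route as the paper: decompose $T = \sigma * \otheta * \dtau$ via Proposition \ref{prop1}, observe that $v*T = (v*\sigma)*\otheta*\dtau$ and $\dot{v}*T = \sigma*(\dot{v}*\otheta)*\dtau$, and read off closedness, criticality and the index $\ind(T)+1$ from the criterion and index formulas attached to that proposition. Your extra bookkeeping (identifying $\dot{v}*\otheta$ as an open simplex via Lemma \ref{lemma1} and checking the degenerate cases) only spells out what the paper's proof leaves implicit.
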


\begin{proof}
By Proposition \ref{prop1}, $T$ splits as a join $\sigma * \otheta * \dtau$, so that $v*T = (v * \sigma) * \otheta * \dtau$ is a closed simplex if and only if $\theta$ and $\tau$ are empty and a regular Morse tile otherwise, since $v * \sigma \neq \emptyset$. Likewise, $\dot{v} * T = \sigma * (\dot{v} *   \otheta) * \dtau$ is critical if and only if $\sigma = \emptyset$ and its index equals then
$\dim (\dot{v} *   \otheta)$ if $\tau$ is empty and $\dim (\dot{v} *   \otheta) + 1$ otherwise, that is $\ind (T) + 1$ in any case.
\end{proof}

\subsection{Star neighborhoods}
\label{subsecstar}

Recall that the {\it star neighborhood} of every face $\sigma$ of a simplicial complex $K$ is the subcomplex $\st_K (\sigma) = \{ \tau \in K \; \vert \; \sigma * \tau \in K \}$, that is the smallest subcomplex of $K$ which is a neighborhood of $\sigma$. The {\it link} of $\sigma$ in $K$ equals $\lk_K (\sigma) = \{ \tau \in \st_K (\sigma) \; \vert \; \sigma \cap \tau = \emptyset \}$, so that
$\lk_K (\sigma) = \{ \emptyset \}$ when $\st_K (\sigma) = \sigma$ and in general, $\st_K (\sigma) = \sigma * \lk_K (\sigma) $, see \cite{RS}. 

We then recall that a closed {\it combinatorial manifold} is a finite simplicial complex whose links share common subdivisions with boundaries of simplices, see \cite{Lick2}. Every link is thus a combinatorial sphere of lower dimension. The underlying topological space of a combinatorial manifold inherits a piecewise linear structure which is that of a {\it piecewise linear manifold}, that is of a topological manifold whose change of coordinates are piecewise linear homeomorphisms, see \cite{RS,Qui}. Not all triangulated manifolds are combinatorial \cite{Can,Edw}.

Let us finally recall for the reader's convenience the structure of links in the first and second barycentric subdivisions of $K$, see Proposition $6.9$ of \cite{RS} and \cite{AdipBen2}.

\begin{prop}
\label{prop2}
Let $K$ be a finite simplicial complex. Then,

1) For every face $\sigma$ of $K$, the map $\lambda \in  \lk_K (\sigma) \mapsto \sigma * \lambda \in \st_K (\sigma)$ induces the isomorphism of simplicial complexes $\sd (\partial \sigma) * \sd (
\lk_K (\sigma) ) \to \lk_{\sd (K)} (\hat{\sigma}) $, where $\sd (\partial \sigma) = \{ \emptyset \}$ if $\dim (\sigma )=0$.

2) As soon as $K$ is not empty, $\sd (K) = \bigcup_{v \in K^{[0]}} \st_{\sd (K)} (\hat{v})$, where the union is taken over all vertices of $K$.

3) For every vertices $v_1 \neq v_2$ of $K$, $\st_{\sd (K)} (\hat{v}_1) \cap \st_{\sd (K)} (\hat{v}_2) = \lk_{\sd (K)} (\hat{v}_1) \cap \lk_{\sd (K)} (\hat{v}_2) $ is not empty if and only if $v_1$ and $v_2$ share a common edge of $K$ and in this case, this intersection is the image of $\st_{\sd (\lk_K (v_1))} (\hat{v}_2) $ (resp. $\st_{\sd (\lk_K (v_2))} (\hat{v}_1) $ ) in $\lk_{\sd (K)} (\hat{v}_1)$
(resp. $\lk_{\sd (K)} (\hat{v}_2)$) under the isomorphism $1)$.
\end{prop}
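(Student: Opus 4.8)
The plan is to work throughout with the combinatorial description of barycentric subdivisions: a simplex of $\sd (K)$ is a chain $\{ \sigma_0 \subsetneq \dots \subsetneq \sigma_q \}$ of non-empty faces of $K$, with vertex set $\{ \hat\sigma_0 , \dots , \hat\sigma_q \}$, and inclusion of simplices corresponds to inclusion of chains. The one recurring computation I would isolate at the outset is the following criterion: for a vertex $v$ of a simplicial complex $L$ and a simplex $F = \{ \sigma_0 \subsetneq \dots \subsetneq \sigma_q \}$ of $\sd (L)$, one has $F \in \st_{\sd (L)} (\hat v)$ if and only if $v \in \sigma_0$. Indeed, $\hat v * F \in \sd (L)$ means that $\{ v \}$ is comparable with every $\sigma_i$, and since each $\sigma_i$ is a non-empty simplex this forces $\{ v \} \subseteq \sigma_i$, equivalently (the $\sigma_i$ being nested) $v \in \sigma_0$; moreover, when $v \in \sigma_0$ and $\sigma_0 \neq \{ v \}$ one also has $\hat v \notin F$, so that such an $F$ automatically lies in $\lk_{\sd (L)} (\hat v)$.

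For part $1)$, I would fix $\sigma$ and unwind the flag description. A simplex $G$ of $\sd (K)$ with underlying chain $\{ \mu_0 \subsetneq \dots \subsetneq \mu_p \}$ lies in $\lk_{\sd (K)} (\hat\sigma)$ exactly when $\hat\sigma \notin G$ and $\{ \sigma \} \cup G$ is again a chain; comparability of $\sigma$ with each $\mu_j$ then splits this chain as $\mu_0 \subsetneq \dots \subsetneq \mu_{i-1} \subsetneq \sigma \subsetneq \mu_i \subsetneq \dots \subsetneq \mu_p$, where $\{ \mu_0 , \dots , \mu_{i-1} \}$ is a chain of proper non-empty faces of $\sigma$, hence a simplex of $\sd (\partial \sigma)$, and each $\mu_j$ with $j \geq i$ is of the form $\sigma * \lambda_j$ with $\emptyset \neq \lambda_i \subsetneq \dots \subsetneq \lambda_p$ a chain in $\lk_K (\sigma)$, hence a simplex of $\sd (\lk_K (\sigma))$. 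This establishes a bijection between the simplices of $\lk_{\sd (K)} (\hat\sigma)$ and the joins of a simplex of $\sd (\partial \sigma)$ with a simplex of $\sd (\lk_K (\sigma))$, which realizes the announced map $\lambda \mapsto \sigma * \lambda$ on the second factor; it manifestly preserves inclusions, hence is a simplicial isomorphism, and when $\dim (\sigma) = 0$ the first factor drops out and $\sd (\partial \sigma) = \{ \emptyset \}$. Part $2)$ is then immediate from the criterion: if $K$ is not empty, every simplex of $\sd (K)$ is a non-empty chain whose smallest term $\sigma_0$, being a non-empty face of $K$, contains some vertex $v$ of $K$, so that simplex lies in $\st_{\sd (K)} (\hat v)$; the reverse inclusion is trivial.

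For part $3)$, I would combine the two ingredients. If a chain $F$ with smallest term $\sigma_0$ lies in $\st_{\sd (K)} (\hat v_1) \cap \st_{\sd (K)} (\hat v_2)$, the criterion gives $\{ v_1 , v_2 \} \subseteq \sigma_0$; in particular $\{ v_1 , v_2 \}$ is an edge of $K$, and since $v_1 \neq v_2$ the term $\sigma_0$ is neither $\{ v_1 \}$ nor $\{ v_2 \}$, so $\hat v_1 , \hat v_2 \notin F$ and $F$ lies in $\lk_{\sd (K)} (\hat v_1) \cap \lk_{\sd (K)} (\hat v_2)$; together with the automatic inclusion $\lk \subseteq \st$ this proves the asserted equality of the two intersections and shows them empty unless $v_1$ and $v_2$ span an edge. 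Conversely, if $\{ v_1 , v_2 \} \in K$ then the single vertex $\widehat{\{ v_1 , v_2 \}}$ already lies in the intersection, which is therefore non-empty. For the final assertion I would invoke part $1)$ with $\sigma = v_1$: since $\dim (v_1) = 0$ it reads $\sd (\lk_K (v_1)) \to \lk_{\sd (K)} (\hat v_1)$, $\{ \lambda_0 \subsetneq \dots \subsetneq \lambda_q \} \mapsto \{ v_1 * \lambda_0 \subsetneq \dots \subsetneq v_1 * \lambda_q \}$; as $\{ v_1 , v_2 \} \in K$ we have $v_2 \in \lk_K (v_1)$, and the criterion applied inside $\lk_K (v_1)$ says that a chain lies in $\st_{\sd (\lk_K (v_1))} (\hat v_2)$ iff its smallest term $\lambda_0$ contains $v_2$, equivalently (as $v_1 \neq v_2$) iff $v_2 \in v_1 * \lambda_0$, equivalently iff its image under the isomorphism lies in $\st_{\sd (K)} (\hat v_2)$. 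Hence the image of $\st_{\sd (\lk_K (v_1))} (\hat v_2)$ equals $\lk_{\sd (K)} (\hat v_1) \cap \st_{\sd (K)} (\hat v_2)$, which coincides with $\st_{\sd (K)} (\hat v_1) \cap \st_{\sd (K)} (\hat v_2)$ by the previous paragraph; the symmetric statement for $v_2$ follows by exchanging the roles of $v_1$ and $v_2$.

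The whole argument is elementary chain-chasing and I do not expect a genuine obstacle; the only points requiring care are the case split in part $1)$ according to which side of $\sigma$ each term $\mu_j$ of the chain lands on, and, in part $3)$, keeping the distinction between ``the chain $F$ contains the vertex $\hat v$'' and ``the smallest term of $F$ contains the vertex $v$'' --- it is precisely the latter condition, together with $v_1 \neq v_2$, that forces the two stars to collapse onto the corresponding links.
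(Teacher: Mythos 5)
Your proof is correct and takes essentially the same approach as the paper's: both describe simplices of $\sd (K)$ as flags of faces of $K$, split such a flag at $\sigma$ to obtain the join decomposition in part 1), and reduce parts 2) and 3) to the observation that a flag lies in $\st_{\sd (K)} (\hat{v})$ exactly when its smallest term contains $v$. The only difference is presentational: you isolate that star criterion as an explicit preliminary lemma, which the paper uses implicitly.
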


\begin{proof}
1) By definition, $\lk_{\sd (K)} (\hat{\sigma}) $ is the collection of simplices $[\hat{\sigma}_0, \dots , \hat{\sigma}_{i-1}, \hat{\sigma}_{i+1}, \dots , \hat{\sigma}_p]$ of $\sd (K)$ such that
$\sigma_0 \subset \dots \subset \sigma_{i-1} \subset \sigma_{i+1} \subset \dots \subset \sigma_p$ is a flag of simplices of $K$ and $ \sigma_{i-1} \subset \sigma \subset \sigma_{i+1} $.
Such a simplex is thus a join $[\hat{\sigma}_0, \dots , \hat{\sigma}_{i-1}] * [ \hat{\sigma}_{i+1}, \dots , \hat{\sigma}_p]$, where $[\hat{\sigma}_0, \dots , \hat{\sigma}_{i-1}]  \in \sd (\partial \sigma)$ and 
$[ \hat{\sigma}_{i+1}, \dots , \hat{\sigma}_p]$ is in bijective correspondence with $[\lk_{{\sigma}_{i+1}} (\sigma) , \dots , \lk_{{\sigma}_{p}} (\sigma) ] \in \sd ( \lk_K (\sigma) ) $ via the map $\lambda \in  \lk_K (\sigma) \mapsto \sigma * \lambda \in \st_K (\sigma)$. The first part of the proposition follows.

2) By definition, every simplex of $\sd (K)$ reads $[\hat{\sigma}_0, \dots , \hat{\sigma}_{p}]$, where $\emptyset \neq \sigma_0 \subset \dots \subset \sigma_p$ is a flag of $K$. Such a simplex thus belongs to 
$\st_{\sd (K)} (\hat{v})$ for every vertex $v$ of ${\sigma}_0$.

3) It follows that if $v_1 \neq v_2$ are vertices of $K$, $\st_{\sd (K)} (\hat{v}_1) \cap \st_{\sd (K)} (\hat{v}_2) $ is the set of simplices $[\hat{\sigma}_0, \dots , \hat{\sigma}_{p}]$ of  $\sd (K)$ such that both $v_1$ and $v_2$ are contained in ${\sigma}_0$, so that this intersection coincides with $\lk_{\sd (K)} (\hat{v}_1) \cap \lk_{\sd (K)} (\hat{v}_2) $  and that its preimage under the isomorphism $1)$ 
equals $\st_{\sd (\lk_K (v_1))} (\hat{v}_2) $ (resp. $\st_{\sd (\lk_K (v_2))} (\hat{v}_1) $) in $ \sd ( \lk_K (v_1) )$ (resp. $ \sd ( \lk_K (v_2) )$).
\end{proof}

Following \cite{AdipBen2, RS}, we adopt the following definition.

\begin{defi}
\label{defderived}
The first derived neighborhood of a subcomplex $L$ in a simplicial complex $K$ is the neighborhood $N(L,K) = \bigcup_{v \in L^{[0]}} \st_{\sd (K)} (\hat{v})$ of $\sd (L)$ in $\sd (K)$.
\end{defi}

\begin{cor}
\label{cor2}
Let $K$ be a finite simplicial complex. Then,

1) For every simplex $\sigma$ of $K$, the map $\lambda \in  \lk_K (\sigma) \mapsto \sigma * \lambda \in \st_K (\sigma)$ induces the isomorphism of simplicial complexes $\sd \big( \sd (\partial \sigma) * \sd ( \lk_K (\sigma) ) \big) \to \lk_{\sd^2 (K)} (\hat{\hat{\sigma}}) $.

2) For every simplices $\sigma \neq \tau$ of $K$, $\st_{\sd^2 (K)} (\hat{\hat{\sigma}}) \cap \st_{\sd^2 (K)} (\hat{\hat{\tau}}) = \lk_{\sd^2 (K)} (\hat{\hat{\sigma}}) \cap \lk_{\sd^2 (K)} (\hat{\hat{\tau}}) $ is not empty if and only if $\sigma$ is a face of  $\tau$ or vice-versa. Moreover, this intersection is the image of $N \big(\hat{\tau} , \sd (\partial \sigma) * \sd (\lk_K (\sigma)) \big)$ (resp.  $N \big( \hat{\sigma} , \sd (\partial \tau) * \sd (\lk_K (\tau))\big)$) in $\lk_{\sd^2 (K)} (\hat{\hat{\sigma}})$ (resp. $\lk_{\sd^2 (K)} (\hat{\hat{\tau}})$) under the isomorphism $1)$.
\end{cor}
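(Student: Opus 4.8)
The plan is to obtain Corollary \ref{cor2} from Proposition \ref{prop2} by applying the latter \emph{twice}: once to $K$ itself and once to its first barycentric subdivision $\sd(K)$, treating the vertices $\hat{\sigma}$, $\hat{\tau}$ of $\sd(K)$ as the ``$\sigma$, $\tau$'' of Proposition \ref{prop2}. Along the way I would use the elementary observation, immediate from Definition \ref{defderived}, that for a single vertex $w$ of a finite simplicial complex $M$ one has $N(\{w\},M)=\st_{\sd(M)}(\hat{w})$, since $\{w\}$ has exactly one vertex.

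For part $1)$, note first that $\hat{\hat{\sigma}}$ is by construction the vertex of $\sd^2(K)=\sd(\sd(K))$ attached to the vertex $\hat{\sigma}$ of $\sd(K)$. Since the boundary of the vertex $\hat{\sigma}$ is $\{\emptyset\}$, Proposition \ref{prop2}.$1)$ applied to the complex $\sd(K)$ at $\hat{\sigma}$ yields an isomorphism $\sd(\lk_{\sd(K)}(\hat{\sigma}))\to\lk_{\sd^2(K)}(\hat{\hat{\sigma}})$ induced by $\mu\mapsto\hat{\sigma}*\mu$. On the other hand, Proposition \ref{prop2}.$1)$ applied to $K$ at $\sigma$ gives an isomorphism $\sd(\partial\sigma)*\sd(\lk_K(\sigma))\to\lk_{\sd(K)}(\hat{\sigma})$ induced by $\lambda\mapsto\sigma*\lambda$; passing to first barycentric subdivisions and composing with the previous isomorphism produces the desired isomorphism $\sd(\sd(\partial\sigma)*\sd(\lk_K(\sigma)))\to\lk_{\sd^2(K)}(\hat{\hat{\sigma}})$. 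A routine check on flags shows that the composite is still the one induced by $\lambda\mapsto\sigma*\lambda$.

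For part $2)$, I would apply Proposition \ref{prop2}.$3)$ to $\sd(K)$ at the two distinct vertices $\hat{\sigma}$ and $\hat{\tau}$. It gives directly $\st_{\sd^2(K)}(\hat{\hat{\sigma}})\cap\st_{\sd^2(K)}(\hat{\hat{\tau}})=\lk_{\sd^2(K)}(\hat{\hat{\sigma}})\cap\lk_{\sd^2(K)}(\hat{\hat{\tau}})$, this intersection being non-empty if and only if $\hat{\sigma}$ and $\hat{\tau}$ span an edge of $\sd(K)$; by the very definition of $\sd(K)$ this means $\sigma\subsetneq\tau$ or $\tau\subsetneq\sigma$, that is, since $\sigma\neq\tau$, that $\sigma$ is a face of $\tau$ or vice-versa. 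Assuming say $\sigma\subsetneq\tau$ and setting $\rho=\lk_\tau(\sigma)\in\lk_K(\sigma)$, so that $\tau=\sigma*\rho$, Proposition \ref{prop2}.$3)$ further identifies the intersection with the image in $\lk_{\sd^2(K)}(\hat{\hat{\sigma}})$ of $\st_{\sd(\lk_{\sd(K)}(\hat{\sigma}))}(\hat{\tau})$. Under the isomorphism of Proposition \ref{prop2}.$1)$ the vertex $\hat{\tau}$ of $\lk_{\sd(K)}(\hat{\sigma})$ corresponds to the vertex $\hat{\rho}$ of the factor $\sd(\lk_K(\sigma))$, which the convention $\lambda\leftrightarrow\sigma*\lambda$ names $\hat{\tau}$; so, via part $1)$, the intersection is the image of the star of that vertex inside $\sd(\sd(\partial\sigma)*\sd(\lk_K(\sigma)))$, which by the first paragraph is exactly $N(\hat{\tau},\sd(\partial\sigma)*\sd(\lk_K(\sigma)))$. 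The case $\tau\subsetneq\sigma$ is symmetric.

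The content is modest; the part I expect to require the most care is purely bookkeeping: keeping the three successive layers of barycenters (vertices of $K$, of $\sd(K)$, of $\sd^2(K)$) distinct, and checking that the two isomorphisms ``induced by $\lambda\mapsto\sigma*\lambda$'' arising from the two uses of Proposition \ref{prop2}.$1)$ compose to the isomorphism induced by $\lambda\mapsto\sigma*\lambda$ asserted in part $1)$. Everything else amounts to substituting $\sd(K)$ for $K$ in Proposition \ref{prop2}.
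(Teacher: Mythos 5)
Your proposal is correct and follows essentially the same route as the paper: both proofs apply Proposition \ref{prop2} twice (to the vertex $\hat{\sigma}$ of $\sd(K)$ and to the simplex $\sigma$ of $K$) and compose the resulting isomorphisms for part 1), then invoke Proposition \ref{prop2}.3) on $\sd(K)$ together with Definition \ref{defderived} to identify the intersection with the stated derived neighborhoods for part 2). Your extra remarks (the check that the composite is still induced by $\lambda\mapsto\sigma*\lambda$, and the identification of $\hat{\tau}$ with $\widehat{\lk_\tau(\sigma)}$) only make explicit points the paper leaves implicit.
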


In the second part of Corollary \ref{cor2}, if $\sigma$ is a face of $\tau$ for instance, $\hat{\sigma}$ is a vertex of $ \sd (\partial \tau) $, subcomplex of $ \sd (\partial \tau) * \sd (\lk_K (\tau))$, whereas 
$\hat{\tau}$ denotes the vertex $\widehat{\lk_\tau (\sigma)}$ of $\sd (\lk_K (\sigma))$ to which it gets identified by the first part of Proposition \ref{prop2}. 

\begin{proof}
1) The first part of Proposition \ref{prop2} applied to the vertex $\hat{\sigma}$ of $\sd (K)$ provides the isomorphism of simplicial complexes $\sd (\lk_{\sd (K)} (\hat{\sigma})) \to \lk_{\sd^2 (K)} (\hat{\hat{\sigma}}) $. A second application of Proposition \ref{prop2} to the simplex $\sigma$ of $K$  provides the isomorphism $\sd (\partial \sigma) * \sd (
\lk_K (\sigma) ) \to \lk_{\sd (K)} (\hat{\sigma}) $. The result follows by composition of these isomorphisms.

2) Likewise, by the third  part of Proposition \ref{prop2}, $\st_{\sd^2 (K)} (\hat{\hat{\sigma}}) \cap \st_{\sd^2 (K)} (\hat{\hat{\tau}}) = \lk_{\sd^2 (K)} (\hat{\hat{\sigma}}) \cap \lk_{\sd^2 (K)} (\hat{\hat{\tau}}) $ is non empty if and only if $\hat{\sigma}$ and $\hat{\tau}$  share a common edge of $\sd (K)$, that is iff $\sigma$ is a face of  $\tau$ or vice-versa. Moreover, in this case,
this intersection gets identified with $\st_{\sd (\lk_{\sd (K)} (\hat{\sigma}))} (\hat{\hat{\tau}})$ (resp. $\st_{\sd (\lk_{\sd (K)} (\hat{\tau}))} (\hat{\hat{\sigma}})$) in $ \lk_{\sd^2 (K)} (\hat{\hat{\sigma}})$ (resp. $ \lk_{\sd^2 (K)} (\hat{\hat{\tau}})$), that is, by the first part of Proposition \ref{prop2} and Definition \ref{defderived}, with $N \big(\hat{\tau} , \sd (\partial \sigma) * \sd (\lk_K (\sigma)) \big)$ (resp.  $N \big( \hat{\sigma} , \sd (\partial \tau) * \sd (\lk_K (\tau))\big)$).
\end{proof}

Let now $S = K \setminus L$ be a relative simplicial complex. We set, for every simplex $\sigma$ of $K$, $\st_S (\sigma) = \st_K (\sigma) \setminus \st_L (\sigma) $, where $\st_L (\sigma) = \emptyset$ if $\sigma \notin L$. We then set $\lk_S (\sigma) = \lk_K (\sigma) \setminus \lk_L (\sigma) $, so that $\st_S  (\sigma) = \sigma * \lk_S (\sigma)$. These links and star neighborhoods are thus defined even for the missing faces of $S$, but they depend on the decomposition $S = K \setminus L$, so that they are rather defined for pairs $(K,L)$ than for complements $K \setminus L$.

\begin{ex}
\label{ex1}
If $\otheta$ is an open simplex of positive dimension and $v$ a vertex of $\theta$, then $\theta = v * \theta'$, where $\theta' = \lk_\theta (v)$, but $\st_{\otheta} (v) = v * \stackrel{\circ}{\theta'}$ differs from $\otheta$ and $\lk_{\otheta} (v) = \stackrel{\circ}{\theta'}$.
\end{ex}

In Example \ref{ex1}, neither the link $\lk_{\otheta} (v) $ nor the star neighborhood $\st_{\otheta} (v)$ detects that the face $\theta'$ opposite to $v$ in $\theta$ is missing in $\otheta$.

\begin{ex}
\label{ex2}
Let $T = \theta \setminus v$ be a regular Morse tile, where $\theta$ is a closed simplex of positive dimension and $v$ one of its vertex, so that $\theta = v * \theta'$ with $\theta' = \lk_\theta (v)$.
Then $\st_T (v) = T$ and $\lk_T (v) = \dot{\theta}'$, since $\lk_v (v) =\{ \emptyset \}$.
\end{ex}

In Example \ref{ex2} on the contrary, the link $\lk_T (v)$ detects the fact that the opposite vertex $v$ is missing in $T$. In general, the stars and links of vertices in Morse tiles are given by the following lemma, see Proposition \ref{prop1}. 

\begin{lemma}
\label{lemma2}
Let $T= \sigma * \otheta * \dtau$ be a Morse tile. Then,

1) For every vertex $v$ of $\sigma$, $\lk_T (v) = \lk_\sigma (v) * \otheta * \dtau$.

2) For every vertex $v$ of $\theta$, $\lk_T (v) = \sigma * \lk_{\otheta} (v)  * \dtau$, where $\lk_{\otheta} (v) =\{ \emptyset \}$ if $\dim (\theta) = 0$.

3) For every vertex $v$ of $\tau$, $\lk_T (v) = \sigma * \otheta *  \lk_{\tau} (v) $.
\end{lemma}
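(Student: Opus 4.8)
The plan is to isolate a single identity governing links of vertices in joins of relative simplicial complexes, and then obtain all three statements by reading off the join decomposition $T = \sigma * \otheta * \dtau$ of Proposition \ref{prop1} after suitably reordering its factors.

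First I would prove the following. If $S_1 = K_1 \setminus L_1$ and $S_2 = K_2 \setminus L_2$ are finite relative simplicial complexes on disjoint vertex sets and $v$ is a vertex of $K_1$, then, with the convention $\lk_{L_1}(v) = \emptyset$ when $v \notin L_1$,
\[ \lk_{S_1 * S_2}(v) = \lk_{S_1}(v) * S_2 . \]
Indeed, from $S_1 * S_2 = (K_1 * K_2) \setminus \big( (L_1 * K_2) \cup (K_1 * L_2) \big)$ and the standard facts that links of subcomplexes of a common complex distribute over finite unions and satisfy $\lk_{A * B}(v) = \lk_A(v) * B$ for every vertex $v$ of $A$, one gets $\lk_{K_1 * K_2}(v) = \lk_{K_1}(v) * K_2$ and $\lk_{(L_1 * K_2) \cup (K_1 * L_2)}(v) = \big( \lk_{L_1}(v) * K_2 \big) \cup \big( \lk_{K_1}(v) * L_2 \big)$. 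It then suffices to apply the set-theoretic identity $(A * C) \setminus \big( (B * C) \cup (A * D) \big) = (A \setminus B) * (C \setminus D)$, valid for $B \subseteq A$ and $D \subseteq C$, which is immediate once every face of $A * C$ is written uniquely as a join $\alpha * \gamma$ with $\alpha \in A$, $\gamma \in C$; this yields $\lk_{S_1 * S_2}(v) = \big( \lk_{K_1}(v) \setminus \lk_{L_1}(v) \big) * \big( K_2 \setminus L_2 \big) = \lk_{S_1}(v) * S_2$.

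Granting this, part 1) follows by writing $T = \sigma * (\otheta * \dtau)$ with $\sigma$ a closed simplex: the identity gives $\lk_T(v) = \lk_\sigma(v) * \otheta * \dtau$ for every vertex $v$ of $\sigma$, with $\lk_\sigma(v)$ the face of $\sigma$ opposite to $v$. For part 2) I would use the commutativity of the join to write $T = \otheta * (\sigma * \dtau)$ and apply the identity with $S_1 = \otheta$, obtaining $\lk_T(v) = \lk_{\otheta}(v) * \sigma * \dtau = \sigma * \lk_{\otheta}(v) * \dtau$; here $\lk_{\otheta}(v) = \stackrel{\circ}{\theta'}$ with $\theta' = \lk_\theta(v)$ when $\dim(\theta) > 0$ by Example \ref{ex1}, while $\lk_{\otheta}(v) = \{ \emptyset \}$ when $\dim(\theta) = 0$, since then $\otheta$ reduces to the vertex $v$ — this is exactly the exceptional clause in the statement. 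For part 3) I would likewise write $T = \dtau * (\sigma * \otheta)$ and apply the identity with $S_1 = \dtau = \tau \setminus \{ \emptyset \}$; since $v \notin \{ \emptyset \}$ forces $\st_{\{ \emptyset \}}(v) = \emptyset$, one has $\lk_{\dtau}(v) = \lk_\tau(v) \setminus \lk_{\{ \emptyset \}}(v) = \lk_\tau(v)$, whence $\lk_T(v) = \lk_\tau(v) * \sigma * \otheta = \sigma * \otheta * \lk_\tau(v)$.

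The remaining work is routine bookkeeping: one must consistently use the canonical decompositions $\sigma = \sigma \setminus \emptyset$, $\otheta = \theta \setminus \partial\theta$, $\dtau = \tau \setminus \{ \emptyset \}$ when invoking the identity, since stars and links of relative complexes depend on the chosen pair $(K,L)$; one must note that each statement is vacuous when its distinguished factor is empty, so no empty-factor subtlety actually arises; and one must check that the commutativity and associativity isomorphisms of the join carry the vertex $v$, hence its link, along as expected. I expect the only genuine step to be the verification of the displayed identity itself, in particular the handling of the case $v \in L_1$ inside it (which occurs in part 2)); once that is in place, the three parts are immediate substitutions.
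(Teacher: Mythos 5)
Your argument is correct, and it reaches the lemma by a somewhat different organization than the paper. The paper fixes at the outset the canonical pair $(K,L)$ with $K=\sigma*\theta*\tau$ and $L=(\sigma*\partial\theta*\tau)\cup(\sigma*\theta)$ (degenerating to $L=\{\emptyset\}$ when $T=\dtau$ and to $L=\sigma*\partial\theta$ when $\tau$ is empty), computes $\lk_K(v)$ and $\lk_L(v)$ separately according to whether $v$ lies in $\sigma$, $\theta$ or $\tau$ (using Example \ref{ex1} for the middle case), and concludes from $\lk_T(v)=\lk_K(v)\setminus\lk_L(v)$. You instead extract the reusable identity $\lk_{S_1*S_2}(v)=\lk_{S_1}(v)*S_2$ for joins of relative complexes and read off the three parts by placing the distinguished factor first; the ingredients are the same (links of joins and of unions of complexes, plus the complement identity that is essentially the definition of the relative join), but your packaging makes transparent why the three cases are formally identical, at the cost of having to verify that the iterated join of the canonical pairs $(\sigma,\emptyset)$, $(\theta,\partial\theta)$, $(\tau,\{\emptyset\})$, in any order and grouping, reproduces exactly the pair the paper works with — which it does, by distributivity of the join over unions, and which also recovers the degenerate pairs $L=\{\emptyset\}$ and $L=\sigma*\partial\theta$ without separate treatment. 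This is the one check you should carry out explicitly, since links of relative complexes depend on the chosen pair; note also that the genuine empty-factor issue is not the vacuity of a statement when its distinguished factor is missing, but the change of $L$ when the \emph{other} factors are missing, and it is precisely your canonical-pair bookkeeping, not vacuity, that disposes of it. With that check spelled out, your proof is complete and equivalent in substance to the paper's.
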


As in Example \ref{ex1}, the link given in the second (resp. third) part of Lemma \ref{lemma2} does not detect the fact that the face opposite to $v$ (resp. the Morse face) is missing in $T$.
The link $ \lk_{\otheta} (v) $ is given by Example \ref{ex1}.

\begin{proof}
Let us write $T = K \setminus L$, with $K = \sigma * \theta * \tau$ and $L = (\sigma * \partial \theta * \tau) \cup ( \sigma * \theta )$ if $\tau$ is not empty and $T$ not reduced to $\dtau$, whereas
$L = \{ \emptyset \}$ if $T = \dtau$ and  $L = \sigma * \partial \theta$ if $\tau$ is empty. By definition, if $v \in \sigma$ (resp. $v \in \theta$, $v \in \tau$), then $\lk_K (v) =  \lk_\sigma (v) * \theta * \tau$
(resp. $\lk_K (v) =   \sigma *  \lk_\theta (v)  * \tau$, $\lk_K (v) =   \sigma * \theta * \lk_\tau (v) $). Likewise, if $v \in \sigma$ (resp. $v \in \tau$), then $\lk_L (v) = (\lk_\sigma (v) * \partial \theta * \tau) \cup (\lk_\sigma (v) * \theta)$ (resp. $\lk_L (v) = \sigma * \partial \theta * \lk_\tau (v)$), whereas if $v \in \theta$ and if $\theta' = \lk_\theta (v)$, then $\lk_L (v) = (\sigma * \partial \theta' * \tau)
\cup (\sigma * \theta' )$, see Example \ref{ex1}. The result thus follows from the definition $\lk_T (v) = \lk_K (v) \setminus \lk_L (v) $.
\end{proof}

The stars and links of vertices in the first barycentric subdivisions of Morse tiles are then given by the following corollary, see Proposition \ref{prop1}. 

\begin{cor}
\label{cor3}
Let $T= \sigma * \otheta * \dtau$ be a Morse tile. Then,

1) For every vertex $v$ of $\sigma * \theta * \tau$, the map $\lambda \in  \lk_T (v) \mapsto v * \lambda \in \st_T (v)$ induces the isomorphism of relative simplicial complexes $ \sd (\lk_T (v)) \to \lk_{\sd (T)} (\hat{v}) $.

2) If $T$ is not reduced to $\dtau$, $\sd (T) = \bigcup_{v} \st_{\sd (T)} (\hat{v})$, where the union is taken over all vertices of $\sigma * \theta * \tau$ and deprived of the empty face if $T = \dtau$.

3) For every vertices $v_1 \neq v_2$ of $\sigma * \theta * \tau$, $\st_{\sd (T)} (\hat{v}_1) \cap \st_{\sd (T)} (\hat{v}_2) = \lk_{\sd (T)} (\hat{v}_1) \cap \lk_{\sd (T)} (\hat{v}_2) $ and this intersection is the image of $\st_{\sd (\lk_T (v_1))} (\hat{v}_2) $ (resp. $\st_{\sd (\lk_T (v_2))} (\hat{v}_1) $ ) in $\lk_{\sd (T)} (\hat{v}_1)$
(resp. $\lk_{\sd (T)} (\hat{v}_2)$) under the isomorphism $1)$.
\end{cor}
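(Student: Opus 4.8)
The plan is to deduce the three statements from Proposition~\ref{prop2}, applied once to the underlying simplex $\overline{T}=\sigma*\theta*\tau$ and once to the subcomplex $L$ of missing faces of $T$. Recall from the proof of Lemma~\ref{lemma2} that $T=\overline{T}\setminus L$, where $L=(\sigma*\partial\theta*\tau)\cup(\sigma*\theta)$ when $\tau\neq\emptyset$ and $T$ is not reduced to $\dtau$, where $L=\sigma*\partial\theta$ when $\tau$ is empty, and where $L=\{\emptyset\}$ when $T=\dtau$. In every case $L$ is a genuine subcomplex of the simplex $\overline{T}$, hence closed under taking faces, and any two vertices of $\overline{T}$ span an edge. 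By definition $\sd(T)=\sd(\overline{T})\setminus\sd(L)$, and for a vertex $v$ of $\overline{T}$ one has $\st_{\sd(T)}(\hat{v})=\st_{\sd(\overline{T})}(\hat{v})\setminus\st_{\sd(L)}(\hat{v})$ and $\lk_{\sd(T)}(\hat{v})=\lk_{\sd(\overline{T})}(\hat{v})\setminus\lk_{\sd(L)}(\hat{v})$, with the subtracted term empty when $v\notin L$. The combinatorial fact I would use for the last two parts is the following consequence of the flag description of $\sd(\overline{T})$: a non-empty simplex $\mu=[\hat{\mu}_0,\dots,\hat{\mu}_p]$ of $\sd(\overline{T})$ lies in $\st_{\sd(\overline{T})}(\hat{v})$ if and only if $v\in\mu_0$, and lies in $\sd(L)$ if and only if its top face $\mu_p$ belongs to $L$; since $L$ is a subcomplex, $v\in\mu_0\subseteq\mu_p$ forces $\{v\}\in L$ once $\mu_p\in L$.

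For the first part, I would apply the first part of Proposition~\ref{prop2} both to $v$ as a vertex of $\overline{T}$ and, when $v\in L$, to $v$ as a vertex of $L$ (using $\sd(\partial v)=\{\emptyset\}$ since $\dim v=0$). Both isomorphisms are induced by $\lambda\mapsto v*\lambda$, and the one for $L$ is the restriction of the one for $\overline{T}$, since $L\subseteq\overline{T}$ and the formula is the same; hence the isomorphism for $\overline{T}$ carries the pair $\big(\sd(\lk_{\overline{T}}(v)),\sd(\lk_L(v))\big)$ to the pair $\big(\lk_{\sd(\overline{T})}(\hat{v}),\lk_{\sd(L)}(\hat{v})\big)$ and induces a bijection of the complements. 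Since $\lk_T(v)=\lk_{\overline{T}}(v)\setminus\lk_L(v)$, so that $\sd(\lk_T(v))=\sd(\lk_{\overline{T}}(v))\setminus\sd(\lk_L(v))$, and likewise $\lk_{\sd(T)}(\hat{v})=\lk_{\sd(\overline{T})}(\hat{v})\setminus\lk_{\sd(L)}(\hat{v})$, this bijection is exactly the required isomorphism $\sd(\lk_T(v))\to\lk_{\sd(T)}(\hat{v})$; when $v\notin L$ it is simply the isomorphism of Proposition~\ref{prop2} for $\overline{T}$. The explicit shape of $\lk_T(v)$ is the one recorded in Lemma~\ref{lemma2}.

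For the second and third parts I would argue with the combinatorial fact above. From it, $\mu\in\st_{\sd(T)}(\hat{v})$ if and only if $v\in\mu_0$ and $\mu_p\notin L$, so $\bigcup_v\st_{\sd(T)}(\hat{v})=\{\mu\in\sd(\overline{T}):\mu_p\notin L\}=\sd(\overline{T})\setminus\sd(L)=\sd(T)$, the union being over all vertices of $\overline{T}$; the empty face is the only possible discrepancy, and only in the case $L=\{\emptyset\}$, i.e. $T=\dtau$, which accounts for the stated caveat. For the third part, if $v_1\neq v_2$ are vertices of $\overline{T}$ and $\{v_1,v_2\}\subseteq\mu_0$ then $\vert\mu_0\vert\geq2$, so on the set $\st_{\sd(\overline{T})}(\hat{v}_1)\cap\st_{\sd(\overline{T})}(\hat{v}_2)$ one has $\hat{v}_i\notin\mu$, whence stars there agree with links, and both conditions $\mu\in\st_{\sd(L)}(\hat{v}_1)$ and $\mu\in\st_{\sd(L)}(\hat{v}_2)$ reduce to $\mu_p\in L$. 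Therefore $\st_{\sd(T)}(\hat{v}_1)\cap\st_{\sd(T)}(\hat{v}_2)=\lk_{\sd(T)}(\hat{v}_1)\cap\lk_{\sd(T)}(\hat{v}_2)$ equals $\big(\st_{\sd(\overline{T})}(\hat{v}_1)\cap\st_{\sd(\overline{T})}(\hat{v}_2)\big)\setminus\sd(L)$; identifying the first factor with the image of $\st_{\sd(\lk_{\overline{T}}(v_1))}(\hat{v}_2)$ by the third part of Proposition~\ref{prop2} for $\overline{T}$, and the subtracted part with the image of $\st_{\sd(\lk_L(v_1))}(\hat{v}_2)$ by the third part of Proposition~\ref{prop2} for $L$ (both images under the isomorphism of the first part), exhibits this intersection as the image of $\st_{\sd(\lk_T(v_1))}(\hat{v}_2)=\st_{\sd(\lk_{\overline{T}}(v_1))}(\hat{v}_2)\setminus\st_{\sd(\lk_L(v_1))}(\hat{v}_2)$, as claimed.

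I expect the only genuine difficulty to be bookkeeping: handling the empty face and the degenerate configurations ($\dim\theta=0$, $\sigma$ or $\tau$ missing, $T=\dtau$), where $L$ and the vertex set of $\overline{T}$ must be pinned down carefully. The structural reasons everything collapses to the clean statements are that $L$ is a subcomplex, hence closed under faces, and that $\overline{T}$ is a single simplex, so that a flag of $\sd(\overline{T})$ lies in $\sd(L)$ exactly when its top simplex does and meets the star of $\hat{v}$ exactly when $v$ lies in its bottom simplex; without these two facts the unions and complements of stars and links would not telescope.
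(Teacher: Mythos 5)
Your proposal is correct and follows essentially the same route as the paper: write $T=\overline{T}\setminus L$ with the explicit missing subcomplex $L$ from the proof of Lemma \ref{lemma2}, then apply Proposition \ref{prop2} to the pair $(\overline{T},L)$ and pass to complements of stars and links. The only difference is presentational: where the paper simply invokes parts $2)$ and $3)$ of Proposition \ref{prop2} for $K$ and $L$ and says the rest "follows along the same lines", you verify the needed compatibilities (a flag lies in $\sd(L)$ iff its top simplex does, and lies in the star of $\hat v$ iff $v$ lies in its bottom simplex) explicitly, which is exactly the implicit content of the paper's argument.
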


\begin{proof}
Let $K = \sigma * \theta * \tau$ and $L = (\sigma * \partial \theta * \tau) \cup ( \sigma * \theta )$ if $\tau$ is not empty and $T \neq \dtau$, $L = \{ \emptyset \}$ if $T = \dtau$ and  $L = \sigma * \partial \theta$ if $\tau$ is empty, so that $T = K \setminus L$ and $\sd(T) = \sd(K) \setminus \sd(L)$.

1) Let $v$ be a vertex of $K$. By Proposition \ref{prop2}, the map $\lambda \in  (\lk_K (v), \lk_L (v)) \mapsto v * \lambda \in (\st_K (v) , \st_L (v))$ induces an isomorphism of pairs
$ (\sd (\lk_K (v)) , \sd (\lk_L (v)))\to (\lk_{\sd (K)} (\hat{v}) , \lk_{\sd (L)} (\hat{v})) $. Hence the isomorphism $ \sd (\lk_T (v)) = \sd (\lk_K (v)) \setminus \sd (\lk_L (v)) \to \lk_{\sd (T)} (\hat{v}) =  \lk_{\sd (K)} (\hat{v}) \setminus \lk_{\sd (L)} (\hat{v})$.

2) By Proposition \ref{prop2}, $\sd (K) = \bigcup_{v \in K^{[0]}} \st_{\sd (K)} (\hat{v})$ and as soon as $L$ is not reduced to $ \{ \emptyset \}$, $\sd (L) = \bigcup_{v \in L^{[0]}} \st_{\sd (L)} (\hat{v})$.
We deduce that $\sd (T) = \sd (K) \setminus \sd (L) = \bigcup_{v \in K^{[0]}} \st_{\sd (T)} (\hat{v})$ whenever $L$ is not reduced to $ \{ \emptyset \}$.

3) The third part follows from Proposition \ref{prop2} along the same lines.
\end{proof}

\section{Morse shellings of barycentric subdivisions}
\label{secshellings}

\subsection{Morse shellings versus pinched handle decompositions}
\label{subsechandledecomp}

Let us now define the key notion of the paper, which we gradually introduced in  \cite{SaWel1,SaWel2,WelAdv} under more restrictive forms.

\begin{defi}
\label{deftilings}
A tiling of a relative simplicial complex $S$ is a partition of its geometric realization by relative facets. It is shellable iff it admits a filtration $\emptyset = S_0 \subset S_1 \subset \dots \subset S_N = S$ by relative subcomplexes, called a shelling, such that for every $p \in \{1 , \dots , N\}$, $S_p \setminus S_{p-1}$ consists of a single relative facet of the tiling. It is said to be an $h$-tiling (resp. a Morse tiling) iff all the relative facets involved are basic or critical tiles (resp. Morse tiles) given by Definition \ref{deftiles}.
\end{defi}
If $S = K \setminus L$, where $L$ is a subcomplex of a finite simplicial complex $K$ containing no facet of $K$, then the relative simplices involved in any tiling of $S$ are of the form $\sigma \setminus \tau$, where $\sigma$ is a facet of $K$ and $\tau$ a subcomplex of $\sigma$ containing $\sigma \cap L$. Classical shellings \cite{BruMan,HachZ,AdipBen2,Z,RS} are shelled h-tilings using only basic tiles. Recall that the boundary of any convex simplicial polytope is shellable in the classical sense \cite{BruMan}, but a compact triangulated manifold, in order to be shellable, has to be piecewise-linearly homeomorphic to a simplex or its boundary, see \cite{Koz,RS}. Moreover, many triangulated three-spheres are not shellable and cannot be as soon as they contain a knotted triangle in their one skeleton by \cite{HachZ}, see also \cite{Lick}. In contrast, we proved the following existence result in \cite{WelAdv} and refer to this paper for the classical definition of stellar subdivision.

\begin{theo}[Theorem $1.3$ of \cite{WelAdv}]
\label{theostellar}
Every finite relative simplicial complex carries a shellable $h$-tiling after finitely many stellar subdivisions at facets.
Moreover, the same holds true using stellar subdivisions at ridges instead, or also using mixed ones.
Finally, in bounded dimension, both the sequence of subdivisions and the shelling are given by some quadratic time  algorithm.
\end{theo}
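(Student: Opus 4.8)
The plan is to build the shellable $h$-tiling by an induction that simultaneously controls the subdivisions and the shelling order, working facet by facet on $\overline{S}$. First I would fix a linear order $\sigma_1, \dots, \sigma_M$ on the facets of the underlying complex $\overline{S}=K$ and set $L$ to be the missing subcomplex, so $S = K \setminus L$. The idea is to process the closed simplices $|\sigma_i|$ one at a time: after subdividing, the part of $|\sigma_i|$ that has not already been covered by $|\sigma_1| \cup \dots \cup |\sigma_{i-1}|$ together with $|L|$ is a relative simplex of the form $\sigma_i \setminus \tau_i$, and the goal is to arrange, via stellar subdivisions at facets performed inside $\sigma_i$ (and propagated consistently to the neighbours, which stellar subdivision at a facet does automatically since it only touches the interior of that facet), that $\sigma_i \setminus \tau_i$ admits a shelling by basic and critical tiles that extends the shelling already built on the previous part. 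The key local statement to isolate is therefore: \emph{a simplex deprived of an arbitrary subcomplex of its boundary becomes, after finitely many stellar subdivisions at facets, a shellable $h$-tiling, and this can be done relative to a shelling already fixed on that boundary subcomplex.} This is the inductive heart.

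To prove that local statement I would induct on dimension. In a simplex $\sigma$ of dimension $n$, a stellar subdivision at the facet $\sigma$ itself replaces $|\sigma|$ by the cone $\hat{\sigma} * \partial \sigma$ over its subdivided boundary; a basic/critical tiling of $\sigma \setminus \tau$ can then be assembled from a shelling of $\partial \sigma$ by coning: Corollary \ref{cor1} tells us precisely that $\hat{\sigma} * T$ is a closed simplex (if $T$ is one) or a regular tile, and $\dot{\hat{\sigma}} * T$ is a basic/critical tile whose index is $\ind(T)+1$ — so coning a shelling of $\partial\sigma$ turns it into a shelling of $\dot{\sigma}$, and the missing subcomplex $\tau \subset \partial\sigma$ is absorbed by first shelling the part of $\partial\sigma$ visible from outside $\tau$. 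The recursion thus reduces the $n$-dimensional problem to shelling a simplex boundary with a prescribed missing subcomplex, i.e. to $(n-1)$-dimensional instances glued along a sphere, which is where the induction hypothesis applies. The bookkeeping that makes this legitimate is that stellar subdivision at a facet of $\sigma$ does not disturb the already-shelled part sitting in lower-dimensional faces, so the shellings produced on adjacent facets remain compatible on their common faces. For the variants — stellar subdivisions at ridges, or mixed — I would note that a stellar subdivision at a ridge can be realized, up to further subdivision, by a composition of subdivisions at facets of the two simplices sharing that ridge, or argue symmetrically with the roles of $\sigma$ and $\partial\sigma$ interchanged; the same coning lemmas (Lemma \ref{lemma1}, Corollary \ref{cor1}) drive the combinatorics either way.

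For the final complexity claim, once the construction is organized as above the count is straightforward in fixed dimension $n$: each facet of $K$ triggers a bounded number of stellar subdivisions per dimensional stratum, the number of simplices produced is polynomial in the size of $K$ with exponent depending on $n$ only, and reading off the shelling order is a linear pass over the resulting face poset; tightening the estimate to quadratic time requires only that the number of new facets created while handling one original facet be linear in the current complex size, which the coning construction respects. The main obstacle I anticipate is not any single step but the relative bookkeeping: one must make the local tiling of $\sigma_i \setminus \tau_i$ \emph{extend} the shelling already fixed on the overlap $\partial\sigma_i \cap (|\sigma_1|\cup\dots\cup|\sigma_{i-1}|\cup|L|)$, i.e. the first subcomplexes of the local shelling must be exactly the tiles lying over that overlap. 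Guaranteeing this forces the induction to be stated in a genuinely relative form — shelling a simplex rel a fixed shelled subcomplex of its boundary — and getting that relative induction to close, with the indices of critical tiles behaving correctly under coning, is the delicate part; everything else is the routine verification that stellar subdivisions at facets are local enough not to spoil previously constructed pieces.
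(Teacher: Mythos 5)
You should first note that this statement is not proved in the paper at all: it is imported verbatim as Theorem $1.3$ of \cite{WelAdv}, so there is no internal proof to compare your sketch against, and it must be judged on its own terms. Judged that way, its central inductive step is asserted rather than proved. An $h$-tiling only allows basic and critical tiles (Definition \ref{deftiles}), i.e.\ a simplex deprived of a union of ridges, possibly together with its restriction set; but the relative facet $\sigma_i \setminus \tau_i$ you must tile has $\tau_i$ an \emph{arbitrary} subcomplex of $\partial \sigma_i$, and after one stellar subdivision at the facet the pieces $\hat\sigma_i * F$, for $F$ a facet of $\partial\sigma_i$, still meet $\tau_i$ in an arbitrary subcomplex of $F$; so coning a shelling of $\partial\sigma_i$ via Corollary \ref{cor1} does not by itself produce basic or critical tiles. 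Saying that the missing subcomplex ``is absorbed by first shelling the part of $\partial\sigma$ visible from outside $\tau$'' is precisely the hard combinatorial content (the pair $(\partial\sigma,\tau)$ need not be shellable in any naive sense for arbitrary $\tau$), and your dimension induction never discharges it: the reduction is to a sphere $\partial\sigma$ with a prescribed missing subcomplex, which is not an instance of your local statement about a simplex, and the further subdivisions it would require live in faces shared with neighbouring facets, contradicting your claim that previously shelled pieces are never disturbed. Incidentally, the ``relative bookkeeping'' you single out as the delicate point is largely a non-issue: in a shelled tiling the overlap $\partial\sigma_i \cap (\sigma_1\cup\dots\cup\sigma_{i-1}\cup L)$ is covered by earlier tiles, so one only needs each local order to be a shelling of the relative complex $\sigma_i\setminus\tau_i$ and then concatenates, exactly as the present paper does in the proof of Theorem \ref{theofirst}.

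Two further points are genuinely wrong or unsupported. First, the ridge variant: a stellar subdivision at a facet adds a vertex only in the interior of that facet and leaves the codimension-one skeleton of the original complex untouched, whereas a stellar subdivision at a ridge adds a vertex on the ridge and modifies every facet containing it; hence no composition of facet subdivisions realizes a ridge subdivision, ``up to further subdivision'' or otherwise, and in any case the theorem asks for a tiling obtained using \emph{only} ridge subdivisions (or mixed ones), so a common refinement would not suffice and ``argue symmetrically'' is not an argument. Second, the quadratic-time claim is not substantiated: nothing in the sketch bounds the number of subdivisions triggered per facet or the cost of extracting the shelling order, and this algorithmic content is exactly what the paper delegates to \cite{WelAdv}. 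As it stands, the proposal reproduces the statement's plausibility but not a proof; the load-bearing lemma (tiling $\sigma\setminus\tau$ for arbitrary $\tau\subset\partial\sigma$ after controlled stellar subdivisions) is assumed.
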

The definition of tiling used in \cite{WelAdv} to get Theorem \ref{theostellar} was actually even a bit more restrictive than that of \ref{deftilings}, namely it required that for every $d \geq 0$, the union of tiles of dimensions greater than $d$ is a subcomplex of the relative simplicial complex, which forces these tiles to be relative facets. In the case of a shelled $h$-tiling using only basic tiles, this stronger condition gets always satisfied but in general it does not and the Morse tilings given by Theorem \ref{maintheo} may not satisfy this stronger condition as well. 

From the topological viewpoint, the filtration given by any shelled $h$-tiling provides a {\it pinched handle decomposition} of the relative simplicial complex, see \S \ref{subsectiles}. Recall that every closed piecewise-linear manifold $M$ carries some piecewise linear handle decomposition, that is some filtration $\emptyset = M_0 \subset M_1 \subset \dots \subset M_N=M$ by compact $PL$-submanifolds such that for every $p \in \{ 1, \dots , N \}$, the manifold $M_p$ is the union of $M_{p-1}$ with a handle $H_p$, the latter being a product of simplices $\sigma \times \theta$ attached to the boundary of $M_{p-1}$ along $\partial \sigma \times \theta$ using some piecewise linear homeomorphism, see \cite{RS}. If $M$ is equipped with some triangulation compatible with its $PL$-structure, that is with some $PL$-homeomorphism $h : K \to M$ from some finite simplicial complex $K$, then an explicit $PL$-handle decomposition of $M$ is given by the union $\cup_{\sigma \in K} \st_{\sd^2 (K)} (\hat{\hat{\sigma}})$, where the filtration is obtained by ordering the simplices of $K$ in increasing dimensions, see Proposition $6.9$ of \cite{RS}. Conversely, given that every handle is attached using some $PL$-homeomorphism, the manifold obtained after a sequence of handle attachments inherits the structure of a piecewise linear manifold, so that a triangulated manifold which is not  combinatorial, that is whose underlying piecewise linear structure is not that of a piecewise linear manifold, cannot carry such a $PL$-handle decomposition. Moreover, a manifold obtained as a sequence of $PL$-handle attachments does not inherit any specific triangulation from the handle attachments, for the handles are not triangulated and the attaching homeomorphisms are not simplicial with respect to any natural triangulation. Theorem \ref{theostellar} on the contrary provides some pinched handle decomposition for every triangulated manifold, even every finite simplicial complex, after finitely many stellar subdivisions at facets. Moreover, the latter is produced in the category of relative simplicial complexes, so that it carries some triangulation. The price to pay for a triangulated manifold is the need to transit through finite simplicial complexes, say singular triangulated manifolds, along the decomposition. The pinched handle decomposition given by any shelled $h$-tiling indeed provides a filtration $\emptyset = M_0 \subset M_1 \subset \dots \subset M_N=M$  where for every $p \in \{ 1, \dots , N \}$, the simplicial complex $M_p$ is obtained from $M_{p-1}$ by either attaching a pinched handle or a regular basic tile by some simplicial embedding from the missing subcomplex of the tile to $M_{p-1}$. In the case of a regular basic tile $T_p$, such an attachment is called an elementary shelling and does not change the $PL$-homeomorphism type provided that $M_{p-1}$ is locally collarable near the union of ridges where $T_p$ is attached, which consists of a $PL$-ball, see \cite{RS}. In the case of a Morse shelling, the attachment of a regular basic tile is replaced by the attachment of a regular Morse tile, that is of a simplex along a pinched $PL$-ball of its boundary, see \S \ref{subsectiles}.

A finite simplicial complex may carry shellable Morse tilings but no $h$-tiling, as for instance the union of two triangles sharing a common vertex. Nevertheless, every Morse tiling (resp. shellable Morse tiling) can be turned into some $h$-tiling (resp. shellable $h$-tiling) after finitely many stellar subdivisions at facets, see \cite{WelAdv}.

Let us now prove Corollaries \ref{corsmooth} and \ref{corcollapse}, while the remaining part of the paper is devoted to the proof of Theorem \ref{maintheo}. Note that in the case of a smooth closed manifold of dimension at most three, the existence of a triangulation carrying a Morse shelling satisfying the conditions of Corollary \ref{corsmooth} has already been established in \cite{SaWel2} using different methods. However, even in these dimensions, Corollary \ref{corsmooth} is stronger since it holds true for every triangulations after sufficiently many barycentric subdivisions, compare \cite{Ben}.

\begin{proof}[Proof of Corollary \ref{corsmooth}]
We know from Theorem $A$ of \cite{Ben} that provided $d$ is large enough, $\sd^d (K)$ carries a discrete Morse function whose critical faces are in one-to-one correspondence with the critical points of $f$, preserving the index, see also \cite{Gal}. We then deduce from Theorem \ref{maintheo} that $\sd^{d+2 }(K)$ carries the desired Morse shelling. 
\end{proof}

\begin{proof}[Proof of Corollary \ref{corcollapse}]
By Lemma $4.3$ of \cite{For1}, every collapsible finite simplicial complex carries a discrete Morse function having a single critical face, of vanishing index, see also \S \ref{subsectiles}. The result thus follows from Theorem \ref{maintheo}.
\end{proof}

We already observed that Corollary \ref{corcollapse} applies in particular to regular neighborhoods of collapsible subcomplexes in closed triangulated manifolds. The latter has to be piecewise linearly homeomorphic to a simplex in the case of combinatorial manifolds by Whitehead's regular neighborhood theorem \cite{Whi,RS}, but is not $PL$-homeomorphic, not even homeomorphic to a ball in general, see \cite{AdipBen2}. In particular, one cannot replace Morse shellings by shelled $h$-tilings in Corollary \ref{corcollapse}, since a pure-dimensional simplicial complex which carries a shelled $h$-tiling using a single closed simplex as critical tile is shellable in the classical sense and thus $PL$-homeomorphic to a simplex, see \cite{Koz,RS}. Shellable $h$-tilings thus encode in a more puzzling way the topology and combinatorics of triangulated manifolds or finite simplicial complexes. 

Let us finally recall that topological handle decompositions exist on all closed topological manifolds except the non smooth four-dimensional ones, see \cite{KS,FQ}. Do the latter carry any topological pinched handle decomposition, that is any filtration $\emptyset = M_0 \subset M_1 \subset \dots \subset M_N=M$ by topological spaces such that for every $p \in \{ 1, \dots , N \}$, $M_p$ is obtained from $M_{p-1}$ by attaching either a basic tile or a pinched handle along their missing faces using some homeomorphism?

\subsection{Morse shellings of subdivided tiles}
\label{subsecsubtiles}

We already proved \cite{SaWel1,SaWel2} that the first barycentric subdivision of any basic (resp. Morse) tile $T$ carries shellable $h$-tilings (resp. Morse tilings) which use
a critical tile if and only if $T$ is critical and this tile is then unique, isomorphic to $T$, see also \cite{WelAdv}. In order to prove Theorem \ref{maintheo}, we need the following variant of this result,  proven in a different way. 

\begin{theo}
\label{theotile1}
Let $T$ and $T'$ be two non empty basic tiles. 

1) If $T$ is an open simplex and $T'$ a closed one, then $\sd (T * T')$ carries a Morse shelling which begins with $N(T , T * T')$. Moreover, the latter uses a unique critical tile
in $N(T , T * T')$, of index $\dim (T)$, and a unique critical tile outside $N(T , T * T')$, of index $\dim (T) +1$.

2) Otherwise, $\sd (T * T')$ carries a shelled $h$-tiling which begins with $N(T , T * T')$ and uses a critical tile if and only if $T* T'$ is critical. Moreover, this tile is then unique, isomorphic to $T* T'$  and belongs to $N(T , T * T')$ if $T* T'$ is a closed simplex while it lies outside this neighborhood if $T* T'$ is an open one.
\end{theo}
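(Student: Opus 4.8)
The plan is to treat the two tiles $T$ and $T'$ via their canonical decompositions from Proposition \ref{prop1} and to build the shelling of $\sd(T*T')$ in two stages, the first producing $N(T,T*T')$ and the second exhausting the complement. Since $T$ and $T'$ are basic, we have $T=\sigma*\otheta$ and $T'=\sigma'*\stackrel{\circ}{\theta'}$ with $\sigma,\sigma'$ closed simplices and $\theta,\theta'$ (possibly empty) simplices; by Lemma \ref{lemma1}, $T*T'$ is the basic tile $(\sigma*\sigma')*\stackrel{\circ}{(\theta*\theta')}$, which is critical precisely when $\sigma*\sigma'$ or $\theta*\theta'$ is empty, and it is a closed simplex when $\theta*\theta'=\emptyset$, an open one when $\sigma*\sigma'=\emptyset$. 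In case 1), $T$ open and $T'$ closed means $\sigma=\emptyset$, $\theta\neq\emptyset$, $\theta'=\emptyset$, so $T*T'=\stackrel{\circ}{\theta}*\sigma'$ is a regular basic tile (as $\sigma'\neq\emptyset$), whereas the sub-tile $N(T,T*T')$, being the derived neighborhood of the open simplex $\stackrel{\circ}{\theta}$ inside it, will carry a critical open simplex of index $\dim\theta$ at its core.

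First I would analyze $N(T,T*T')$ itself. By Definition \ref{defderived} it is $\bigcup_{v\in T^{[0]}}\st_{\sd(T*T')}(\hat v)$, the union over the vertices of $\overline T=\sigma*\theta$. Using Corollary \ref{cor3} (parts 1 and 3) to describe the stars and their pairwise intersections as derived neighborhoods inside links, I would set up an inductive shelling of this union one star at a time: order the vertices of $\overline T$ so that the vertices of $\sigma$ come first (if present) and those of $\theta$ next; then each $\st_{\sd(T*T')}(\hat v)$ is a cone $\hat v * \lk_{\sd(T*T')}(\hat v)$, glued to the previously built part along $\st_{\sd(\lk_{T*T'}(v))}(\cdot)$, which by Corollary \ref{cor3} and an inductive application of the theorem to the smaller tile $\lk_{T*T'}(v)$ is itself a Morse-shellable piece with controlled missing subcomplex. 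The key point is that attaching a cone over a Morse-shellable relative complex along a Morse-shellable subcomplex of it yields a Morse shelling of the cone (via Corollary \ref{cor1}, which says coning a Morse tile gives a regular Morse tile unless the tile is a closed simplex, and coning-minus-base raises the index by one): this is the engine that lets me propagate shellings through the star decomposition. Carrying this out, $N(T,T*T')$ acquires a Morse shelling with exactly one critical tile — the open simplex $\stackrel{\circ}{\theta}$ in case 1) of index $\dim T$, or the closed simplex $\sigma*\sigma'$ in case 2) if $T*T'$ is a closed simplex, and no critical tile at all otherwise.

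Second, I would shell the complement $\sd(T*T')\setminus N(T,T*T')$. Topologically this complement is the part of $\sd(T*T')$ sitting over $\overline{T'}=\sigma'*\theta'$ away from $\overline T$; I would exhaust it by the stars $\st_{\sd(T*T')}(\hat w)$ of the remaining vertices $w$, ordered by increasing dimension of the simplex of $T*T'$ they index, intersecting these with the complement. Each such intersected star is again, by Corollary \ref{cor2}/\ref{cor3} applied to the relevant links, a derived neighborhood inside a link and hence shellable by the inductive hypothesis; attaching it contributes no new critical tile except exactly once, when the "dotted" direction $\dot\tau$ forced by removing the base of a cone appears — this is the mechanism producing, in case 1), the unique critical tile outside $N$ of index $\dim T+1$ (an instance of $\stackrel{\circ}{\theta}*\dot v$, whose index is $\dim\theta+1$ by the remark after Proposition \ref{prop1} and by Corollary \ref{cor1}), and in case 2) the open-simplex critical tile when $T*T'$ is open. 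One then checks that concatenating the two stage-shellings gives a genuine shelling of all of $\sd(T*T')$ in the sense of Definition \ref{deftilings}, i.e.\ that each new piece meets the union of the previous ones along its full missing subcomplex; this is where the compatibility statements in Corollary \ref{cor3}(3) are used.

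The main obstacle I anticipate is the bookkeeping of \emph{which} face is missing from each subdivided star and verifying that these missing subcomplexes match up along the filtration so that the tiles glue correctly — in particular ensuring that a regular basic tile is never attached along something that fails to be a $PL$-ball, and that exactly one critical tile surfaces in each prescribed location with the claimed index. Because $N(T,T*T')$ and its complement interact precisely along the links handled by Corollary \ref{cor3}(3), and because coning behaves predictably on the canonical decomposition $\sigma*\otheta*\dtau$ by Corollary \ref{cor1}, I expect the induction to close; but keeping the index count exact (distinguishing the $\dim T$ versus $\dim T+1$ contributions in case 1) will require care with the distinction between $\otheta$ and $\otheta*\dot v$ flagged in the discussion after Proposition \ref{prop1}.
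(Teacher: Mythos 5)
Your proposal follows essentially the same route as the paper: induction on $\dim T+\dim T'$, a filtration of $\sd(T*T')$ by vertex stars ordered so that the vertices of $\overline{T}$ come first (making the shelling begin with $N(T,T*T')$), identification of each attaching region via Corollary \ref{cor3}(3) with a derived neighborhood $N(T_j,T_j*T'_j)$ in the subdivided link, the inductive hypothesis applied to that lower-dimensional join, and coning with apex $\hat v_j$ minus the bases over already-covered tiles, with Corollary \ref{cor1} controlling the critical tiles. The only differences are presentational (you split the construction into two stages, whereas the paper runs a single star-by-star induction, and the index bookkeeping you defer is exactly what the paper's case analysis of $T=T_j*\dot v_j$ versus $T=T_j$ carries out), so the outline is sound.
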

The $h$-tiling given by the second part of Theorem \ref{theotile1} uses only basic tiles, it thus makes the above mentioned result of  \cite{SaWel1} more precise. By the way, the latter applies in the case covered by the first part of  Theorem \ref{theotile1} as well, providing a shelled $h$-tiling using no critical tile but which does not begin with $N(T , T * T')$, so that the two critical tiles of consecutive indices given by Theorem \ref{theotile1} can be cancelled. 

\begin{theo}
\label{theotile2}
Let $T$ be a non empty basic tile and $T'$ a non basic Morse tile. Then, $\sd (T * T')$ carries a Morse shelling which begins with $N(T , T * T')$ and uses a critical tile if and only if $T* T'$ is critical. Moreover, this tile is then unique, isomorphic to $T* T'$ and disjoint from $N(T , T * T')$.
\end{theo}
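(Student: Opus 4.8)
The plan is to reduce the statement to Theorem \ref{theotile1} by splitting off the ``pinched'' part of the non-basic Morse tile $T'$ and treating the rest as a join of basic tiles. By Proposition \ref{prop1}, write $T' = \sigma' * \otheta' * \dtau'$ with $\tau'$ non-empty (since $T'$ is non-basic). By Lemma \ref{lemma1}, $T * T'$ is again a Morse tile, and it is critical precisely when $T$ and $\sigma'$ are both empty, in which case it is isomorphic to $\otheta' * \dtau'$, hence to $T'$ itself up to the basic factor $T$; I would record this once and for all at the start. The decomposition $T * T' = (T * \sigma') * \otheta' * \dtau'$ exhibits $T*T'$ as the join of the basic tile $T_0 := T * \sigma'$ (basic by Lemma \ref{lemma1}) with $\otheta' * \dtau'$; note $T_0$ may be empty exactly when $T$ and $\sigma'$ both are, i.e.\ exactly in the critical case.

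First I would handle the generic (regular) case, where $T_0$ is non-empty. Here I want to iterate the cone description: $\dtau' = \dot v * (\dtau' \text{ minus its apex})$, or more efficiently, peel the vertices of $\tau'$ one at a time using Corollary \ref{cor1} / the remark after Lemma \ref{lemma1} that $\dot v * (\,\cdot\,)$ raises the order by one. Concretely, pick a vertex $v$ of $\tau'$ and write $\tau' = v * \tau''$; then $T*T' = \big( (T_0 * \otheta') \setminus\text{(its relevant ridges)} \big)$ joined appropriately, and I would set up an induction on $\dim(\tau')$ whose base case $\dim(\tau')=0$ is literally the first part of Theorem \ref{theotile1} applied to the open simplex $\otheta'$ (or $\otheta' * \dot v$, which is again an open-type factor after absorbing $v$) against the closed simplex coming from $T_0$. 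The barycentric subdivision commutes with joins, $\sd(A*B) = \sd(A)*\sd(B)$ is false in general but $\sd$ of a join decomposes via Proposition \ref{prop2}; the cleaner route is to use Corollary \ref{cor3} to describe $\sd(T*T')$ as $\bigcup_v \st_{\sd(T*T')}(\hat v)$ and feed the inductive shelling of the ``link direction'' into a shelling of the whole, making sure at each stage that the newly attached tiles are basic or regular Morse (never critical, in this case) and that the filtration starts with $N(T, T*T')$ — the latter being the union of stars of vertices of $T$, which is consistent with the inductive construction because those vertices sit inside $T_0$.

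The genuinely delicate case is the critical one, where $T_0 = \emptyset$, so $T$ is a point-free (empty) basic tile, $\sigma' = \emptyset$, and $T*T' = \otheta' * \dtau'$ is critical of index $\dim(\theta')+1$. Here the claim ``the unique critical tile is disjoint from $N(T, T*T')$'' is the crux: when $T=\emptyset$, $N(T,T*T')$ is empty, so disjointness is vacuous and the real content is that there is exactly one critical tile, isomorphic to $T*T'$, sitting somewhere in $\sd(T*T')$ — but the intended reading of the theorem must be that $T$ is a non-empty basic tile (the hypothesis), so $T=\emptyset$ cannot occur; thus in the critical case we have $T\neq\emptyset$, $\sigma'=\emptyset$, $T_0 = T$ is non-empty after all, and $T*T' = T * \otheta' * \dtau'$. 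The main obstacle is therefore to produce a shelling of $\sd(T * \otheta' * \dtau')$ that (i) starts with $N(T, T*T')$, (ii) places its single critical tile — of index $\dim(\theta')+1$, isomorphic to $T*T'$ — \emph{outside} that neighborhood, and (iii) uses only regular Morse or basic tiles everywhere else. My plan for this is to first shell $N(T,T*T')$ using only basic tiles: by Corollary \ref{cor3}, $N(T,T*T')$ retracts onto $\sd(T)$ and its complement in $\sd(T*T')$ deformation-retracts onto the ``far'' subdivided face, and the basic tile $T$ being non-critical (it is the Morse face direction that carries the criticality), one can shell its derived neighborhood by basic tiles along the lines of Theorem \ref{theostellar}; then I would shell the complement $\sd(T*T') \setminus N(T,T*T')$ by induction on $\dim\tau'$, the base case again being the first part of Theorem \ref{theotile1} — whose \emph{outside}-neighborhood critical tile of index $\dim(T)+1 = \dim(\theta')+1$ is exactly the one we want — and the inductive step peeling vertices of $\tau'$ with Corollary \ref{cor1}, which raises the index by one at each stage and keeps the critical tile isomorphic to the growing join, so that after exhausting $\tau'$ the single critical tile is isomorphic to $\otheta' * \dtau'$, hence to $T*T'$ (the factor $T$ having been absorbed into the already-shelled neighborhood). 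Checking that the filtration remains a genuine shelling across the junction between $N(T,T*T')$ and its complement — i.e.\ that each complementary tile is attached along its missing subcomplex inside what is already built — is where I expect to spend the most care, and Corollary \ref{cor3}(3), describing the overlap of stars of distinct vertices, is the tool that makes this bookkeeping go through.
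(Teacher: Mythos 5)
Your toolkit is broadly the same as the paper's (Proposition \ref{prop1} to split the tiles, an induction reducing to Theorem \ref{theotile1}, Corollary \ref{cor1} for dotted cones, Corollary \ref{cor3} for the star/link bookkeeping), but there is a genuine error at the one point that carries the whole content of the theorem: the criterion for $T*T'$ to be critical. Writing $T=\sigma*\otheta$ and $T'=\sigma'*\stackrel{\circ}{\theta'}*\dot{\tau}'$, one has $T*T'=(\sigma*\sigma')*(\otheta*\stackrel{\circ}{\theta'})*\dot{\tau}'$, so by Proposition \ref{prop1} the join is critical if and only if $\sigma*\sigma'=\emptyset$, that is if and only if $T$ is an \emph{open} simplex and $T'$ is critical --- not, as you assert, when ``$T$ and $\sigma'$ are both empty''. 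Your $T_0=T*\sigma'$ is a basic tile, not the closed-simplex factor of the canonical splitting, so its being empty is not what governs criticality; since $T\neq\emptyset$ by hypothesis, your criterion never holds and your case split collapses. Concretely, your ``regular case $T_0\neq\emptyset$'', in which you promise to attach only non-critical tiles, actually contains the genuinely critical case, contradicting the ``only if'' clause; and your attempted repair in the ``delicate case'' retains only $T\neq\emptyset$, $\sigma'=\emptyset$, never imposes that $T$ be an open simplex, and assigns the critical tile the index $\dim(\theta')+1$, whereas $\ind(T*T')=\dim(T)+\dim(\theta')+2$ (your equality $\dim(T)+1=\dim(\theta')+1$ has no justification). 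The same confusion underlies the claim that ``$T$ is non-critical'': in the critical case $T$ is an open simplex, i.e.\ a critical basic tile, and it is exactly this open factor that generates the critical tile. Finally, Theorem \ref{theostellar} concerns stellar subdivisions and gives no shelling of $\sd(T*T')$ beginning with $N(T,T*T')$; producing shellings that start with such derived neighborhoods is precisely what Theorem \ref{theotile1} was engineered for, and the deformation-retraction language does not substitute for that combinatorial statement.

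For comparison, the paper inducts on $\dim T+\dim T'$ and shells $\sd(T*T')$ vertex by vertex via $L_j=\bigcup_{i\le j}\st_{\sd(T*T')}(\hat{v}_i)$, ordering the vertices so that those of $\tau'$ come last; each link $\lk_{T*T'}(v_j)=T_j*T'_j$ is treated by the induction hypothesis or by Theorem \ref{theotile1}, and the unique critical tile arises exactly when one cones (with bases removed, Corollary \ref{cor1}) over the link of the \emph{first} vertex of $\tau'$, which has the form (open simplex)$*$(closed simplex) precisely when $\sigma=\sigma'=\emptyset$; Theorem \ref{theotile1}(1) then places a critical tile of index $\dim(\otheta*\stackrel{\circ}{\theta'})$ inside the relevant derived neighborhood, and the dotted cone raises the index to $\ind(T*T')$, yielding a tile isomorphic to $T*T'$ and disjoint from $N(T,T*T')$. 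Without the correct criticality criterion your induction cannot track where (or whether) this tile appears, so the ``if and only if'', the uniqueness, and the disjointness from $N(T,T*T')$ all remain unproved.
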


\begin{proof}[Proof of Theorem \ref{theotile1}]
We proceed by induction on $n = \dim (T) + \dim (T')$.  If $n=0$, both $T$ and $T'$ are zero-dimensional and four cases have to be considered, depending on whether they are closed or open vertices, that is vertices deprived of their empty face. One checks the result in these cases, see Figure \ref{figtilings}, and when $T$ is an open simplex and $T'$ a closed one, the tiling obtained on the subdivided edge $\sd (T * T')$ uses an open simplex together with a closed one deprived of its empty face. The latter is not a basic tile.

 \begin{figure}[h]
   \begin{center}
    \includegraphics[scale=1]{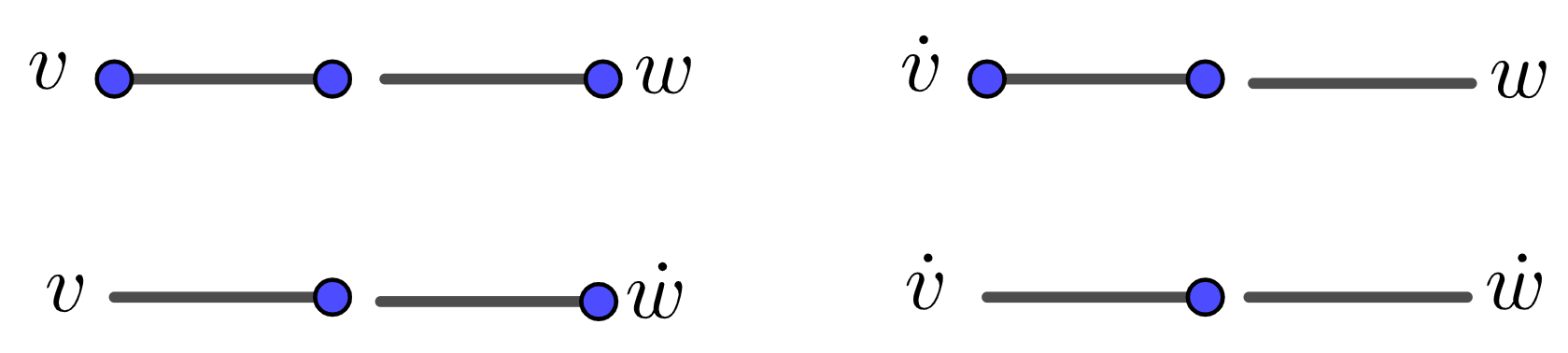}
    \caption{Tilings of $T* T'$ in dimension one.}
    \label{figtilings}
      \end{center}
 \end{figure}

Let us now assume the result proven up to the rank $n-1$ and prove it for $\dim (T) + \dim (T')=n$. By Proposition \ref{prop1}, $T = \sigma * \otheta$ and $T' = \sigma' * \stackrel{\circ}{\theta'}$ for some simplices  $\sigma, \sigma', \theta, \theta'$.  Let us number $v_0, \dots , v_k$ the vertices of $\overline{T} = \sigma * \theta$, beginning with those of $\sigma$ and then $v_{k+1}, \dots , v_{n+1}$ the vertices of $\overline{T}' = \sigma' * \theta'$, beginning with those of $\sigma'$. We deduce from Corollary \ref{cor3} a filtration $\emptyset \subset L_0 \subset L_1 \subset \dots \subset L_{n+1} = \sd (T*T')$ of relative simplicial complexes, where for every $j \in \{ 0 , \dots , n+1 \}$, $L_j = N([v_0, \dots , v_j] , T*T') = \cup_{i=0}^j \st_{ \sd (T*T')} (\hat{v}_i)$. We are going to Morse shell $L_{n+1}$ by finite induction on $j \in \{ 0 , \dots , n+1 \}$. By Corollary \ref{cor3}, $\lk_{ \sd (T*T')} (\hat{v}_0)$ is isomorphic to $\sd (\lk_{T*T'} (v_0))$ and by the induction hypothesis, the latter carries a shelled $h$-tiling which uses a critical tile if and only if the basic tile $\lk_{T*T'} (v_0)$ is critical and in this case, the critical tile is unique and isomorphic to $\lk_{T*T'} (v_0)$. In particular, this $h$-tiling uses a closed simplex iff $\lk_{T*T'} (v_0)$ is a closed simplex. By Corollary \ref{cor1}, $L_0 = \hat{v}_0 * \lk_{\sd (T*T')} (\hat{v}_0)$ inherits a shelled $h$-tiling which uses a critical tile if and only if $\lk_{T*T'} (v_0)$ is a closed simplex  and is this case, the critical tile is unique of vanishing index. Moreover, the latter is a closed simplex if $T*T'$ is a closed simplex and closed simplex deprived of its empty face otherwise, that is, by our choice of numbering of vertices, if $T = \dot{v}_0$ and $T'$ is a closed simplex.

Let us now assume $L_{j-1}$ equipped with a Morse shelling and let us extend it to $L_j$, $j \in \{ 1 , \dots , n \}$. The tile $\lk_{T*T'} (v_j)$ is a join $T_j * T'_j$, where the simplex $\overline{T}_j$ (resp. $\overline{T}'_j$ ) underlying the basic tile $T_j$ (resp. $T'_j$) has vertices $v_0, \dots , v_{j-1}$  (resp. $v_{j+1}, \dots , v_{n+1}$). By Corollary \ref{cor3}, $\st_{\sd (T*T')} (\hat{v}_j) \cap L_{j-1} $
coincides with $N(T_j , T_j * T'_j)$ in $\lk_{\sd (T*T')} (\hat{v}_j) $ via the isomorphism $\lk_{\sd (T*T')} (\hat{v}_j) \cong \sd (\lk_{T*T'} (v_j))$. By the induction hypothesis, two cases may occur. If $T_j$ is an open simplex and $T'_j$ a closed simplex, $\sd (T_j * T'_j)$ carries a Morse shelling which begins with $N(T_j , T_j * T'_j)$ and uses a critical tile of index $\dim (T_j)$ in this neighborhood and no closed simplex outside. By our choice of numbering of the vertices, this case may occur only if $T = T_j * \dot{v}_j$ and $T' = T'_j$ or if $T=T_j$ and $T' = v_j * T'_j$. The cone with apex $\hat{v}_j$ over this Morse shelling of $\sd (T_j * T'_j)$ provides a Morse shelling on $\st_{\sd (T*T')} (\hat{v}_j) = \hat{v}_j * \lk_{\sd (T*T')} (\hat{v}_j)$ and we extend the Morse shelling of 
$L_{j-1}$ to $L_j$ by concatenation of the latter, depriving however all cones with apex $\hat{v}_j$ over the tiles of $N(T_j , T_j * T'_j)$ of their basis. By Corollary \ref{cor1}, this extension adds exactly one critical tile, of index $\dim (T_j) +1$. When $T = T_j * \dot{v}_j$, the latter belongs to $N(T , T * T')$ and its index equals $\dim (T)$, whereas when $T=T_j$, the latter lies outside 
 $N(T , T * T')$ and its index equals $\dim (T)+1$.
 
 After this finite induction, we get a Morse shelling on $L_n$ which we now have to extend to $L_{n+1}$. By the induction hypothesis and Corollary \ref{cor3}, $\lk_{\sd (T*T')} (\hat{v}_{n+1}) \cong
 \sd (\lk_{T*T'} (v_{n+1}))$ carries a shelled $h$-tiling which uses only basic tiles among which an open simplex if and only if $\lk_{T*T'} (v_{n+1})$ is an open simplex. By concatenation of the cone with apex $\hat{v}_{n+1}$ over this $h$-tiling and deprived of its base, we get the Morse shelling of $L_{n+1}$.  By Corollary \ref{cor1}, this extension uses only basic tiles and adds a critical one if and only if $\lk_{T*T'} (v_{n+1})$ is an open simplex. By our choice of numbering of the vertices, this case occurs when either $T*T'$ is an open simplex, or $T$ is an open simplex and $T'$ a closed simplex reduced to $v_{n+1}$. The result follows.
\end{proof}

\begin{proof}[Proof of Theorem \ref{theotile2}]

The proof goes along the same lines as the one of Theorem \ref{theotile1} and proceeds by induction on $n = \dim (T) + \dim (T')$. The minimal dimension for which there exists a non basic Morse tile is one and the latter is then unique, isomorphic to a closed simplex deprived of its empty face. If $n=1$ then, there are to cases to consider, depending on whether $T$ is a closed vertex or an open one, that is deprived of its empty face. Set $T=v$ in the first case and $T= \dot{v}$ in the second one, whereas $T' = \dtau$ for some closed edge $\tau$. By Theorem \ref{theotile1}, $\sd (v * \tau)$ carries a shelled $h$-tiling  which begins with $N (v , v* \tau)$ and uses a closed simplex as unique critical tile, the latter belonging to $N (v , v* \tau)$ and thus containing ${v}$. By removing ${v}$, we get a Morse shelling on $\sd (v * T')$ which begins with $N (v , v* T')$ and does not use any critical tile. Likewise, Theorem \ref{theotile1} provides a Morse shelling on  $\sd (\dot{v} * \tau)$ which begins with $N (\dot{v} , \dot{v}* \tau)$ and uses a unique critical tile of vanishing index in this neighborhood and a unique critical tile of index one outside. The index zero critical tile is a closed simplex deprived of its empty face which contains the vertex $\hat{v} $ deprived of its empty face. By removing this vertex, we get a Morse shelling on $\sd (\dot{v} * T')$ which begins with $N (\dot{v}  , \dot{v} * T')$ and uses a unique critical tile of index one, thus isomorphic to $\dot{v} * T'$, and located outside this derived neighborhood.

Let us now assume the result proven up to the rank $n-1$ and prove it for $ \dim (T) + \dim (T') = n$. By Proposition \ref{prop1}, $T = \sigma * \otheta$ and $T' = \sigma' * \stackrel{\circ}{\theta'} * \dtau'$ for some simplices  $\sigma, \sigma', \theta, \theta', \tau'$.  Let us number $v_0, \dots , v_k$ the vertices of $\overline{T} = \sigma * \theta$, beginning with those of $\sigma$ and then $v_{k+1}, \dots , v_{n+1}$ the vertices of $\overline{T}' = \sigma' * \theta' * \tau'$, beginning with those of $\sigma'$, then those of $\theta'$ and ending with those of $\tau'$. We deduce from Corollary \ref{cor3} a filtration $\emptyset \subset L_0 \subset L_1 \subset \dots \subset L_{n+1} = \sd (T*T')$ of relative simplicial complexes, where for every $j \in \{ 0 , \dots , n+1 \}$, $L_j = N([v_0, \dots , v_j] , T*T') = \cup_{i=0}^j \st_{ \sd (T*T')} (\hat{v}_i)$. We are again going to Morse shell $L_{n+1}$ by finite induction on $j \in \{ 0 , \dots , n+1 \}$. By Corollary \ref{cor3}, $\lk_{ \sd (T*T')} (\hat{v}_0)$ is isomorphic to $\sd (\lk_{T*T'} (v_0))$ and $\lk_{T*T'} (v_0)$ is either a closed simplex deprived of its empty face, or a Morse tile for which the induction hypothesis holds. In both cases, $\sd (\lk_{T*T'} (v_0))$ carries a Morse shelling which does not use any closed simplex and by Corollary \ref{cor1}, $L_0 = \hat{v}_0 * \lk_{\sd (T*T')} (\hat{v}_0)$ inherits a Morse shelling which does not use any critical tile.
Let us now assume $L_{j-1}$ equipped with a Morse shelling and extend it to $L_j$, $j \in \{ 1 , \dots , n \}$. The tile $\lk_{T*T'} (v_j)$ is a join $T_j * T'_j$, where the simplex $\overline{T}_j$ (resp. 
$\overline{T}'_j$ ) underlying the basic tile $T_j$ (resp. $T'_j$) has vertices $v_0, \dots , v_{j-1}$  (resp. $v_{j+1}, \dots , v_{n+1}$). By Lemma \ref{lemma2}, the tile $T_j$ is basic whereas $T'_j$ is basic if $v_j$ is a vertex of $\tau'$ and Morse non basic otherwise. By Corollary \ref{cor3}, $\st_{\sd (T*T')} (\hat{v}_j) \cap L_{j-1} $
coincides with $N(T_j , T_j * T'_j)$ in $\lk_{\sd (T*T')} (\hat{v}_j) $ via the isomorphism $\lk_{\sd (T*T')} (\hat{v}_j) \cong \sd (\lk_{T*T'} (v_j))$. If $v_j$ is not the first vertex of $\tau'$ or if $T*T'$ is not critical, the induction hypothesis or Theorem \ref{theotile1} depending on the case provides a Morse shelling on  $\sd (T_j * T'_j)$ which begins with $N(T_j , T_j * T'_j)$ and uses no critical tile in this neighborhood except possibly a closed simplex. The cone with apex $\hat{v}_j$ over this Morse shelling provides a Morse shelling on $\st_{\sd (T*T')} (\hat{v}_j) = \hat{v}_j * \lk_{\sd (T*T')} (\hat{v}_j)$ and we extend the Morse shelling of 
$L_{j-1}$ to $L_j$ by concatenation of the latter, depriving however all cones with apex $\hat{v}_j$ over the tiles of $N(T_j , T_j * T'_j)$ of their basis. By Corollary \ref{cor1}, this extension does not add any critical tile. If $v_j$ is the first vertex of $\tau'$ and if $T*T'$ is critical, so that $T=T_j$ is an open simplex and $T'_j$ a closed one, Theorem \ref{theotile1} provides a Morse shelling on $\sd (T_j * T'_j)$ which begins with $N(T_j , T_j * T'_j)$ and uses a unique critical tile of index $\dim (T)$ in this neighborhood. The cone with apex $\hat{v}_j$ over this Morse shelling provides a Morse shelling on $\st_{\sd (T*T')} (\hat{v}_j) $ and we extend the Morse shelling of 
$L_{j-1}$ to $L_j$ by concatenation of the latter, depriving however all cones with apex $\hat{v}_j$ over the tiles of $N(T_j , T_j * T'_j)$ of their basis. By Corollary \ref{cor1}, this extension adds one critical tile of index $\dim (T) + 1$, so that it is isomorphic to $T*T'$. 
We have now extended the Morse shelling up to $L_n$ by using a critical tile if and only if $T*T'$ is critical and this tile is then unique, disjoint from $N(T, T*T')$. But we know from Theorem \ref{theotile1} that $\lk_{\sd (T*T')} (\hat{v}_{n+1}) \cong
 \sd (\lk_{T*T'} (v_{n+1}))$ carries a shelled $h$-tiling which uses only basic tiles and no open simplex since $\lk_{\tau'} (v_{n+1})$ is a non empty closed simplex. By concatenation of the cone with apex $\hat{v}_{n+1}$ over this $h$-tiling and deprived of its base, we get the Morse shelling of $L_{n+1} = \sd (T*T')$ without adding any critical tile by Corollary \ref{cor1}. Hence the result.
\end{proof}

\subsection{Morse shellings on first barycentric subdivisions}
\label{subsecfirst}

We already proved in \cite{WelAdv} that the first barycentric subdivision of every relative simplicial complex carries Morse shellings. We however need the following slightly refined version.

\begin{theo}
\label{theofirst}
Let $S=K \setminus L$ be a finite relative simplicial complex and let $v$ be a vertex of $K$. Then, $\sd (S)$ carries a Morse shelling which begins with $\st_{\sd (S)} (\hat{v}) = N(v,S)$.
\end{theo}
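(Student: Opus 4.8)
\textbf{Proof plan for Theorem \ref{theofirst}.}

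The plan is to run an induction on the number of vertices of $K$, using the star decomposition of $\sd(S)$ from Corollary \ref{cor3} (and Proposition \ref{prop2}) and feeding the already-established Morse shellings of subdivided tiles, Theorems \ref{theotile1} and \ref{theotile2}, into the inductive step. Order the vertices of $K$ as $v=v_0, v_1, \dots, v_m$, starting with the prescribed vertex $v$, and set $L_j = N(\{v_0,\dots,v_j\}, S) = \bigcup_{i=0}^{j}\st_{\sd(S)}(\hat v_i)$, so that $\emptyset \subset L_0 \subset L_1 \subset \dots \subset L_m = \sd(S)$ by part 2 of Proposition \ref{prop2} applied to the pair $(K,L)$. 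The first layer is $L_0 = \st_{\sd(S)}(\hat v_0) = N(v,S)$, which by Corollary \ref{cor3} is the cone $\hat v_0 * \lk_{\sd(S)}(\hat v_0)$ with $\lk_{\sd(S)}(\hat v_0)\cong \sd(\lk_S(v_0))$; since $\lk_S(v_0)$ is a relative complex with strictly fewer vertices, the induction hypothesis gives a Morse shelling on $\sd(\lk_S(v_0))$, whence Corollary \ref{cor1} (coning, deprived of base where needed) gives one on $L_0$ beginning as required — here trivially, since $L_0$ is the whole first layer.

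The inductive step is to extend a Morse shelling of $L_{j-1}$ across $L_j \setminus L_{j-1}$, for $j=1,\dots,m$. The new material is $\st_{\sd(S)}(\hat v_j)\setminus L_{j-1}$, and by Corollary \ref{cor3}, part 3, the overlap $\st_{\sd(S)}(\hat v_j)\cap L_{j-1}$ is exactly the derived neighborhood $N\big(\{v_0,\dots,v_{j-1}\}\cap \overline{\lk_S(v_j)},\ \lk_S(v_j)\big)$ inside $\lk_{\sd(S)}(\hat v_j)\cong \sd(\lk_S(v_j))$. Now $\lk_S(v_j)$ is a relative complex; after refining to its decomposition into relative simplices one is, tile by tile, in the situation of Theorems \ref{theotile1} and \ref{theotile2}: each such tile is a join of a basic tile (the part spanned by the already-treated vertices $v_0,\dots,v_{j-1}$) with a basic or non-basic Morse tile (the rest), and those theorems furnish a Morse shelling of its subdivision that \emph{begins} with the derived neighborhood of the ``old'' factor. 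Coning with apex $\hat v_j$ (Corollary \ref{cor1}), and removing the base over the old part so as not to re-tile $L_{j-1}$, extends the shelling of $L_{j-1}$ to one of $L_j$; concatenating over $j$ yields the desired Morse shelling of $\sd(S)=L_m$, which by construction begins with $L_0 = \st_{\sd(S)}(\hat v) = N(v,S)$.

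\textbf{Main obstacle.} The routine-looking part — coning and concatenating — is genuinely routine given Corollary \ref{cor1}; the real work is bookkeeping, and this is where I expect the difficulty. First, one must check that ``begins with $N(v,S)$'' is preserved: the shellings supplied by Theorems \ref{theotile1} and \ref{theotile2} begin with the derived neighborhood of the first tile-factor, so one has to verify that these local ``begins with'' conditions glue to the global one along the filtration $L_\bullet$, i.e. that each extension step only touches $\st_{\sd(S)}(\hat v_j)\setminus L_{j-1}$ and adds relative subcomplexes in the correct order. Second, and more delicate, is the relative bookkeeping: $\lk_S(v_j)=\lk_K(v_j)\setminus\lk_L(v_j)$ may have missing faces, so when one cuts $\overline{\lk_S(v_j)}$ into relative simplices the pieces produced are exactly basic or Morse tiles (by Proposition \ref{prop1}) and the overlap with $L_{j-1}$ restricts on each piece to the derived neighborhood of a \emph{basic} sub-tile; one must also confirm that the pieces in this cut glue into a genuine shelling of $\sd(\lk_S(v_j))$ rather than merely a tiling, which is where the hypotheses of Theorems \ref{theotile1} and \ref{theotile2} that the local shelling ``begins with'' the prescribed neighborhood are used in an essential way to make the concatenation across pieces legal. (This ordered-concatenation point is precisely the refinement over \cite{WelAdv} that the theorem is asserting, so it is the crux rather than an afterthought.)
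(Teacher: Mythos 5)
Your plan departs from the paper's proof in an essential way: you filter $\sd(S)$ by the stars $\st_{\sd(S)}(\hat v_j)$ of \emph{all} vertices of $K$ and try to run the vertex-by-vertex induction that the paper uses only \emph{inside a single tile} (in the proofs of Theorems \ref{theotile1} and \ref{theotile2}), whereas the paper filters $K$ by its \emph{facets}, taking those of $\st_K(v)$ first. The gap sits exactly where you flag the ``main obstacle'', and it is not bookkeeping: at the step from $L_{j-1}$ to $L_j$ the overlap $\st_{\sd(S)}(\hat v_j)\cap L_{j-1}$ is the derived neighborhood $N(W_j,\lk_S(v_j))$ of the whole set $W_j$ of previously treated vertices adjacent to $v_j$, so you need a Morse shelling of $\sd(\lk_S(v_j))$ beginning with the derived neighborhood of an \emph{arbitrary set of vertices} of an \emph{arbitrary} relative complex --- a statement strictly stronger than the theorem you are proving, and not supplied by your induction hypothesis (which only gives ``begins with the star of one vertex''). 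Your proposed fix --- cut $\lk_S(v_j)$ into relative simplices and invoke Theorems \ref{theotile1}/\ref{theotile2} piece by piece --- does not meet the hypotheses of those theorems: a piece of a facet-by-facet cut of $\lk_S(v_j)$ is a facet $\sigma_{\mathrm{old}}*\sigma_{\mathrm{new}}$ deprived of its intersection with the earlier pieces and with $\lk_L(v_j)$, and this missing subcomplex is an arbitrary subcomplex of the boundary; it need not be a union of (ridge of one factor)$*$(other factor) plus a Morse face, so the piece is in general not a join $T*T'$ of a basic tile spanned by the old vertices with a basic or Morse tile, which is exactly the structure Theorems \ref{theotile1} and \ref{theotile2} require. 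Even granting each piece, you would still have to order the pieces so that their local shellings, each beginning with its own portion of $N(W_j,\cdot)$, concatenate into a shelling of $\sd(\lk_S(v_j))$ beginning with all of $N(W_j,\cdot)$; in the paper this kind of interleaved concatenation is only ever performed when the ambient complex is globally a join of two already shelled complexes (as in the proof of Theorem \ref{maintheo}, where the pieces really are joins $T_l*T'_m$ of tiles), and $\lk_S(v_j)$ carries no such global old/new join structure. So the proposal is a plan whose decisive step is missing, not a proof.

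The paper's route is designed precisely to avoid ever needing a ``derived neighborhood of a vertex set'' statement. It orders the facets of $K$ starting with those containing $v$, so each relative facet $P_j=K_j\setminus(K_{j-1}\cup L)$ with $v\in\sigma_j$ is a basic tile of the form $T_j=v*T'_j$ (a cone over the single distinguished vertex, with $T'_j$ the opposite ridge, possibly deprived of its empty face) minus a union $\tau_j$ of faces of codimension at least two. Theorems \ref{theotile1}/\ref{theotile2} then apply verbatim with the ``old'' join factor equal to the single closed or dotted vertex $v$, giving a Morse shelling of $\sd(T_j)$ beginning with $N(v,T_j)$; the extra missing faces $\tau_j$ are stripped off tile by tile as in Proposition~3.3 of \cite{WelAdv} (each tile of the shelling meets $\sd(\tau_j)$ in a single face of codimension at least two, so it stays a Morse tile), and one concatenates: first the pieces $N(v,P_j)$ for the facets containing $v$, then the remainders, then the shellings of the other $\sd(P_j)$. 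If you want to salvage your vertex-star induction, you would first have to prove the stronger local statement that $\sd$ of a general relative simplex admits a Morse shelling beginning with the derived neighborhood of any prescribed set of its vertices; nothing in the paper provides that, and it is not obviously true.
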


\begin{proof}
Let us number $\sigma_1, \dots, \sigma_N$ the facets of $K$, begining with those of $\st_K (v)$, say $\sigma_1, \dots, \sigma_k$. It induces a filtration $\emptyset = K_0 \subset K_1 \subset \dots \subset K_N = K$ of subcomplexes, where for every $j \in \{ 1, \dots , N \}$, $K_j$ denotes the complex containing $\sigma_1, \dots, \sigma_j$ together with their faces. We then set $P_j = K_j \setminus (K_{j-1} \cup L)$. This relative facet can be written $T_j \setminus \tau_j$, where $T_j$ is a basic tile and $\tau_j$ a union of faces of $T_j$ of codimension greater than one. Moreover, 
by construction, for every $j \in  \{ 1, \dots , k \}$, $T_j = v * T'_j$ where $T'_j$ is the ridge of $T_j$ opposite to $v$, which is contained in $\lk_K (v)$ and may be deprived of its empty face. By Theorem \ref{theotile1}, $\sd (T_j)$ carries then a Morse shelling  which begins with $N(v , T_j)$ and does not use any open simplex. We now proceed as in the proof of Proposition $3.3$ of \cite{WelAdv}. 
Let us denote by $T_j^1 , \dots , T_j^{(n_j+1)!}$ the tiles of this $h$-tiling following the shelling order. The closed  simplex $\overline{T}_j^p$ underlying $T_j^p$ reads $\{ {\sigma}_0^p, \dots , {\sigma}_{n_j}^p \}$, where for every $0 \leq l \leq m \leq n_j$, $\sigma_m^p$ is a $m$-dimensional face of $\overline{T}_j^p$ containing $\sigma_l^p$. For every $p \in \{1 , \dots , (n_j+1)!\}$, such that $T_j^p$ intersects $\sd (\tau_j)$, let us denote by $i_p$ the greatest element in $\{ 0 , \dots , n_j \}$ such that ${\sigma}_{i_p}^p$ is contained in $\overline{\tau}_j$. Then, $\sigma_{i}^p$ is contained in $\overline{\tau}_j$ for every $0 \leq i \leq i_p$ and moreover $i_p < n_j-1$ by assumption. We deduce that ${T}_j^p \cap \sd(\tau_j)$ coincides with the face ${T}_j^p \cap [{\sigma}_0^p, \dots , {\sigma}_{i_p}^p]$, so that ${T}_j^p \setminus \sd(\tau_j)$ is a Morse tile. Hence, for every $j \in \{ 1, \dots , k \}$, $\sd (P_j)$ carries a Morse shelling which begins with $N (v , P_j)$. Likewise, by Proposition $3.3$ of \cite{WelAdv},  for every $j \in \{ k+1, \dots , N \}$, $\sd (P_j)$ carries a Morse shelling. After concatenation of the shellings of  $N (v , P_j)$ for every $j \in \{ 1, \dots , k \}$, we deduce a Morse shelling of $N (v,S)$ which we extend to a Morse shelling of $\sd (\st_S (v))$ by concatenation of what remains of the shellings of $\sd (P_j)$, $j \in \{ 1, \dots , k \}$. We finally extend the Morse shelling of $\sd (\st_S (v))$ to the whole $\sd (S)$ by concatenation of the Morse shellings of $\sd (P_j)$, $j \in \{ k+1, \dots , N \}$. Hence the result.
\end{proof}

\subsection{Proof of Theorem \ref{maintheo}}
\label{subsecmaintheo}

By section $3$ of \cite{For1} we know that perturbing the discrete Morse function $f$ a bit if necessary, we may assume that its sub level sets $\{ \sigma \in K \; \vert \; f(\sigma) \leq m \}$, $m \in \R$, induce a filtration $\emptyset = K_0 \subset K_1 \subset \dots \subset K_N = K$ of $K$ such that for every $i \in \{ 1, \dots , N \}$, $K_i \setminus K_{i-1}$ consists either of a single simplex $\sigma_i$ which is then critical for $f$, or of a pair of simplices $\theta_i, \tau_i$, regular for $f$, such that $\theta_i$ is a ridge of $\tau_i$, so that $K_i$ collapses to $K_{i-1}$. This filtration induces
 a filtration $\emptyset = L_0 \subset L_1 \subset \dots \subset L_N = \sd^2 (K)$ of $\sd^2 (K)$, where for every $i \in \{ 1, \dots , N \}$, $L_i = N ( \sd (K_i) , \sd (K)) = \cup_{\sigma \in K_i} \st_{\sd^2 (K)} (\hat{\hat{\sigma}})$. We are going to prove that any Morse shelling on $L_{i-1}$ extends to $L_i$ and moreover, this extension can be chosen to use a critical tile if and only if $K_i \setminus K_{i-1} = \{ \sigma_i \}$ is a critical face of $f$ and this critical tile is then unique, of the same index $\dim (\sigma_i)$ as $\sigma_i$. The result then follows by finite induction, since nothing has to be proven for $i=1$.
 Let thus $i \in \{ 1, \dots , N \}$ and $L_{i-1}$ be equipped with a Morse shelling. Two cases have to be considered. If $K_i \setminus K_{i-1} = \{ \sigma_i \}$ is a critical level of $f$, then
 $L_i = L_{i-1} \cup \st_{\sd^2 (K)} (\hat{\hat{\sigma}}_i)$. If $\dim (\sigma_i) = 0$, this union is disjoint. Then, by Corollary \ref{cor2}, $\lk_{\sd^2 (K)} (\hat{\hat{\sigma}}_i)$ is isomorphic to 
 $\sd^2 (\lk_K (\sigma_i))$ and by Theorem \ref{theofirst}, in fact Theorem $1.3$ of \cite{WelAdv}, $\sd^2 (\lk_K (\sigma_i))$ carries a Morse shelling which uses a unique closed simplex containing the empty face, the other possible critical tiles of vanishing index being deprived of the empty face. By Corollary \ref{cor3}, $ \st_{\sd^2 (K)} (\hat{\hat{\sigma}}_i) = \hat{\hat{\sigma}}_i * \lk_{\sd^2 (K)} (\hat{\hat{\sigma}}_i)$ inherits a Morse shelling which uses a unique critical, a closed simplex. We then obtain by concatenation a Morse shelling on $L_i = L_{i-1} \sqcup \st_{\sd^2 (K)} (\hat{\hat{\sigma}}_i)$ by removing to this closed simplex its empty face if $L_{i-1}  \neq \emptyset$. In all cases, this extension adds one critical tile, with vanishing index. Now, if $\dim (\sigma_i) \neq 0$, the simplex $\sigma_i$ is maximal in $K_i$ and by Corollary \ref{cor2}, $\lk_{\sd^2 (K)} (\hat{\hat{\sigma}}_i)$ is isomorphic to $\sd \big( \sd (\partial \sigma_i) * \sd (\lk_K (\sigma_i)) \big)$. Moreover, through this isomorphism, 
 \begin{equation}
 \label{eqn1}
 \st_{\sd^2 (K)} (\hat{\hat{\sigma}}_i) \cap L_{i-1}  \cong  N \big( \sd (\partial \sigma_i) ,  \sd (\partial \sigma_i) * \sd (\lk_K (\sigma_i))  \big)
 \end{equation}

The complex $ \sd (\partial \sigma_i)$ is shellable since $\partial \sigma_i$ is by \cite{Bjo} and we set $\sd (\partial \sigma_i) = T_0 \sqcup \dots \sqcup T_a$ the associated shelled $h$-tiling, where $T_0$ is a closed simplex, $T_a$ an open one and the remaining tiles are basic and regular. Likewise, $\sd (\lk_K (\sigma_i))$ carries a Morse shelling by Theorem \ref{theofirst}, in fact Theorem $1.3$ of \cite{WelAdv},  and we set  $\sd (\lk_K (\sigma_i)) = T'_0 \sqcup \dots \sqcup T'_b$, where $T'_0$ is the unique closed simplex of the Morse shelling, containing the empty face, the other possible critical tiles of vanishing index being deprived of the empty face. By Theorems \ref{theotile1} and \ref{theotile2}, for every $(l,m) \in \{ 0, \dots , a \} \times \{ 0, \dots , b \}$, $\sd (T_l * T'_m)$ carries a Morse shelling which begins with $N(T_l , T_l * T'_m)$. Moreover, it uses a critical tile in this neighborhood if and only if $m=0$ and $l \in \{0 , a \}$, this critical tile being unique and isomorphic to a closed simplex if $l=0$ and a critical tile of index $\dim (\partial \sigma_i)$ if $l=a$. By concatenation of these Morse shellings of $N(T_l , T_l * T'_m)$, where $(l,m) \in \{ 0, \dots , a \} \times \{ 0, \dots , b \}$ follows the lexicographic order, and then by concatenation of what remains of the Morse shellings of $\sd (T_l * T'_m)$, still following the lexicographic order of $\{ 0, \dots , a \} \times \{ 0, \dots , b \}$, we get a Morse shelling of $\lk_{\sd^2 (K)} (\hat{\hat{\sigma}}_i)$ which begins with $N \big( \sd (\partial \sigma_i) ,  \sd (\partial \sigma_i) * \sd (\lk_K (\sigma_i))  \big)$ under the isomorphism (\ref{eqn1}). Moreover, the only critical tiles involved in the tiling of this derived neighborhood are one closed simplex and one critical tile of index $\dim (\partial \sigma_i)$. The cone with apex $\hat{\hat{\sigma}}_i$ over this Morse shelling of $\lk_{\sd^2 (K)} (\hat{\hat{\sigma}}_i)$ provides a Morse shelling of $\st_{\sd^2 (K)} (\hat{\hat{\sigma}}_i)$ and we extend the Morse shelling of $L_{i-1}$ to $L_i$ by concatenation of the latter, depriving however the cones with apex $\hat{\hat{\sigma}}_i$ over the tiles of $N \big( \sd (\partial \sigma_i) ,  \sd (\partial \sigma_i) * \sd (\lk_K (\sigma_i))  \big)$ of their basis. By Corollary \ref{cor1}, this extension from $L_{i-1}$ to $L_i$ adds a unique critical tile, of index $\dim (\sigma_i)$. 

Now, if $K_i \setminus K_{i-1} = \{ \theta_i, \tau_i \}$, where  $\theta_i$ is a ridge of $\tau_i$, we set $M_i = L_{i-1} \cup \st_{\sd^2 (K)} (\hat{\hat{\tau}}_i)$, so that  $L_i = M_{i} \cup \st_{\sd^2 (K)} (\hat{\hat{\theta}}_i)$. Again, the simplex $\tau_i$ is maximal in $K_i$, $\lk_{\sd^2 (K)} (\hat{\hat{\tau}}_i)$ is isomorphic to $\sd \big( \sd (\partial \tau_i) * \sd (\lk_K (\tau_i)) \big)$ by Corollary \ref{cor2} and through this isomorphism, 
 \begin{equation}
 \label{eqn2}
 \st_{\sd^2 (K)} (\hat{\hat{\tau}}_i) \cap L_{i-1}  \cong  N \big( \sd (\partial \tau_i \setminus \theta_i) ,  \sd (\partial \tau_i) * \sd (\lk_K (\tau_i))  \big),
 \end{equation}
 where $\partial \tau_i \setminus \theta_i$ denotes the simplicial complex $\partial \tau_i $ deprived of the facet $ \theta_i$.
The complex $\partial \tau_i$ carries a classical shelling which ends by the facet $\theta_i$ and induces a shelling on $ \sd (\partial \tau_i)$ by \cite{Bjo}. We set $\sd (\partial \tau_i) = T_0 \sqcup \dots \sqcup T_c$ the associated shelled $h$-tiling and $c' \in \{ 0 , \dots , c \}$ the integer such that  $T_0 \sqcup \dots \sqcup T_{c'}$ provides a shelling of $ \sd (\partial \tau_i \setminus \theta_i)$.
Thus, $T_{c'+1} \sqcup \dots \sqcup T_{c}$ provides a shelled $h$-tiling of the first barycentric subdivision of the open simplex $\otheta_i$, so that this shelling is a cone with apex $\hat{\theta}_i$ and deprived of its base over a shelling of $\sd (\partial \theta_i)$. For every $j \in \{ c'+1, \dots , c \} $, we set
$T_j = \dot{\hat{\theta}}_i * T''_j$, where $T''_{c'+1} \sqcup \dots \sqcup T''_{c}$ shells $\sd (\partial \theta_i)$. Likewise, $\sd (\lk_K (\tau_i))$ carries a Morse shelling by Theorem \ref{theofirst}, in fact Theorem $1.3$ of \cite{WelAdv},  and we set  $\sd (\lk_K (\tau_i)) = T'_0 \sqcup \dots \sqcup T'_d$, where $T'_0$ is the unique closed simplex of the Morse shelling, containing the empty face, the other possible critical tiles of vanishing index being deprived of the empty face. By Theorems \ref{theotile1} and \ref{theotile2}, for every $(l,m) \in \{ 0, \dots , c' \} \times \{ 0, \dots , d \}$, $\sd (T_l * T'_m)$ carries a Morse shelling which begins with $N(T_l , T_l * T'_m)$ and does not use any critical tile in this neighborhood except a unique closed simplex when $l=m=0$.
In the same way,  for every $(l,m) \in \{ c'+1, \dots , c \} \times \{ 0, \dots , d \}$, $\sd (T_l * T'_m)$ carries a Morse shelling which begins with $N(T''_l , T_l * T'_m)$ and does not use any critical tile in this neighborhood, since $T_l * T'_m = T''_l *  \dot{\hat{\theta}}_i  *T'_m$ and none of the Morse tiles $\dot{\hat{\theta}}_i  *T'_m$  are closed simplices by Corollary \ref{cor1}. 
 By concatenation of these Morse shellings of $N(T_l , T_l * T'_m)$, where $(l,m) \in \{ 0, \dots , c' \} \times \{ 0, \dots , d \}$ follows the lexicographic order, and then by concatenation of 
 the Morse shellings of $N(T''_l , T_l * T'_m)$, where $(l,m) \in \{ c'+1, \dots , c \} \times \{ 0, \dots , d \}$ follows the lexicographic order and finally by concatenation of
 what remains of the Morse shellings of $\sd (T_l * T'_m)$  following the lexicographic order of $ \{ 0, \dots , c \} \times \{ 0, \dots , d \}$, we get a Morse shelling of $\lk_{\sd^2 (K)} (\hat{\hat{\tau}}_i)$ which begins with $N \big( \sd (\partial \tau_i \setminus \theta_i) ,  \sd (\partial \tau_i) * \sd (\lk_K (\tau_i))  \big)$ under the isomorphism (\ref{eqn2}). Moreover, the only critical tile involved in the tiling of this derived neighborhood is a closed simplex. The cone with apex $\hat{\hat{\tau}}_i$ over the Morse shelling of $\lk_{\sd^2 (K)} (\hat{\hat{\tau}}_i)$ provides a Morse shelling of $\st_{\sd^2 (K)} (\hat{\hat{\tau}}_i)$ and we extend the Morse shelling of $L_{i-1}$ to $M_i$ by concatenation of the latter, depriving however the cones with apex $\hat{\hat{\tau}}_i$ over the tiles of 
 $N \big( \sd (\partial \tau_i \setminus \theta_i) ,  \sd (\partial \tau_i) * \sd (\lk_K (\tau_i))  \big)$ of their basis. By Corollary \ref{cor1}, this extension from $L_{i-1}$ to $M_i$ does not add any critical tile to the Morse tiling. 
 
 It remains to extend the Morse shelling we just obtained on $M_i$ to the whole $L_i = M_{i} \cup \st_{\sd^2 (K)} (\hat{\hat{\theta}}_i)$, without using any critical tile. By Corollary \ref{cor2}, $\lk_{\sd^2 (K)} (\hat{\hat{\theta}}_i)$ is isomorphic to $\sd \big( \sd (\partial \theta_i) * \sd (\lk_K (\theta_i)) \big)$. The link $\lk_K (\theta_i)$ contains the vertex $\lk_{\tau_i} (\theta_i)$ and we denote with some abuse by $\hat{\tau}_i$ the associated vertex of $ \sd (\lk_K (\theta_i))$. By Theorem \ref{theofirst}, $\sd (\lk_K (\theta_i))$ carries a Morse shelling which begins with $\st_{\sd (\lk_K (\theta_i))} (\hat{\tau}_i)$ and we set  $\sd (\lk_K (\theta_i)) = T'_0 \sqcup \dots \sqcup T'_e$ such a Morse shelling and denote by $e' \in \{ 0, \dots , e \}$ the integer such that $T'_0 \sqcup \dots \sqcup T'_{e'}$ Morse shells $\st_{\sd (\lk_K (\theta_i))} (\hat{\tau}_i)$. The latter is thus a cone with apex $\hat{\tau}_i$ over a tiling of $\lk_{\sd (\lk_K (\theta_i))} (\hat{\tau}_i)$, so that for every
 $l \in \{ 0, \dots , e' \}$, $T'_l = \hat{\tau}_i * T'''_l$, where $T'''_0 \sqcup \dots \sqcup T'''_{e'}$ Morse shells  $\lk_{\sd (\lk_K (\theta_i))} (\hat{\tau}_i)$. By Corollary \ref{cor2}, 
 $\st_{\sd (\lk_K (\theta_i))} (\hat{\tau}_i) \cap M_i \cong N \big( \sd (\partial \theta_i )* \hat{\tau}_i ,  \sd (\partial \theta_i) * \sd (\lk_K (\theta_i))  \big)$, where $ \sd (\partial \theta_i)$ is equipped with the shelling $T''_{c'+1} \sqcup \dots \sqcup T''_{c}$ chosen above. By Theorems \ref{theotile1} and \ref{theotile2}, for every $(l,m) \in \{ c'+1, \dots , c \} \times \{ 0, \dots , e' \}$, $\sd (T''_l * T'_m)$ carries a Morse shelling which begins with $N(T''_l * \hat{\tau}_i , T''_l * T'_m)$ and does not use any critical tile in this neighborhood except a unique closed simplex when $l=c'+1$ and $m=0$.
Likewise,  for every $(l,m) \in \{ c'+1, \dots , c \} \times \{ e'+1, \dots , e \}$, $\sd (T''_l * T'_m)$ carries a Morse shelling which begins with $N(T''_l , T''_l * T'_m)$ and does not use any critical tile in this neighborhood, since none of the tiles $T'_m$, $m>e'$ can be a closed simplex even though some might be closed simplices deprived of their empty faces. 
 By concatenation of the Morse shellings of $N(T''_l * \hat{\tau}_i , T''_l * T'_m)$, where $(l,m) \in \{ c'+1, \dots , c \} \times \{ 0, \dots , e' \}$ follows the lexicographic order, and then by concatenation of the Morse shellings of $N(T''_l , T''_l * T'_m)$, where $(l,m) \in \{ c'+1, \dots , c \} \times \{ e'+1, \dots , e \}$ follows the lexicographic order and finally by concatenation of
 what remains of the Morse shellings of $\sd (T''_l * T'_m)$  following the lexicographic order of $ \{ c'+1, \dots , c \} \times \{ 0, \dots , e \}$, we get a Morse shelling of $\lk_{\sd^2 (K)} (\hat{\hat{\theta}}_i)$ which begins with $N \big( \sd (\partial \theta_i )* \hat{\tau}_i ,  \sd (\partial \theta_i) * \sd (\lk_K (\theta_i)) \big)$ under the isomorphism given by Corollary \ref{cor2}. Moreover, the only critical tile involved in the tiling of this derived neighborhood is a closed simplex. The cone with apex $\hat{\hat{\theta}}_i$ over the Morse shelling of $\lk_{\sd^2 (K)} (\hat{\hat{\theta}}_i)$ provides a Morse shelling of $\st_{\sd^2 (K)} (\hat{\hat{\theta}}_i)$ and we extend the Morse shelling of $M_{i}$ to $L_i$ by concatenation of the latter, depriving however the cones with apex $\hat{\hat{\theta}}_i$ over the tiles of $N \big( \sd (\partial \theta_i )* \hat{\tau}_i ,  \sd (\partial \theta_i) * \sd (\lk_K (\theta_i)) \big)$ of their basis. By Corollary \ref{cor1}, this extension does not add any critical tile to the Morse tiling. Hence the result. \qed
 
 \begin{rem}
 \label{finalremark}
 1) The proof of Theorem \ref{maintheo} actually provides a relative version of this theorem as well. Namely, if $f$ is a discrete Morse function on the relative finite simplicial complex $S = K \setminus L$, then the relative simplicial complex $\sd^2 (K) \setminus N ( \sd (L) , \sd (K))$ carries a Morse shelling whose critical tiles are in one-to-one correspondence with the critical faces of $f$, preserving the index.
 
 2) If $f$ is the trivial discrete Morse function on $K$, for which all simplices are critical, then  Theorem \ref{maintheo} provides a Morse shelling on $\sd^2 (K)$ whose critical tiles are in one-to-one correspondence with the simplices of $K$, preserving the index. When $K$ is a combinatorial manifold, this result may be compared with Proposition $6.9$ of \cite{RS}, up to which $\sd^2 (K)$ inherits in this case a $PL$-handle decomposition whose handles are in one-to-one correspondence with the simplices of $K$, preserving the index. 
 
 3) The existence of Morse shellings on all barycentric subdivisions of finite simplicial complexes has already been obtained in \cite{WelAdv}, but without control on the critical tiles used, see also Theorem \ref{theofirst}.
 
 4) Conversely, every Morse shelling on a finite simplicial complex encodes a class of discrete Morse functions whose critical faces are in one-to-one correspondence with the critical tiles of the Morse shelling, preserving the index, see \cite{SaWel2,WelHHA}.
 
 5) Every Morse shelling on a finite simplicial complex provides two spectral sequences which converge to the (co)homology of the complex and whose first pages are free graded modules spanned by the critical tiles of the tiling, see \cite{WelHHA}.
 
 6) K. Adiprasito and B. Benedetti have likewise proved that the second barycentric subdivision of any linear triangulation of a convex polytope is shellable in the classical sense, see \cite{AdipBen2}.
 \end{rem}

Univ Lyon, Universit\'e Claude Bernard Lyon 1, CNRS UMR 5208, Institut Camille Jordan, 43 blvd. du 11 novembre 1918, F-69622 Villeurbanne cedex, France

{\tt welschinger@math.univ-lyon1.fr.}
\end{document}